\begin{document}

\title[Non-iterated periodic billiard trajectories]{On the multiplicity of non-iterated\\ periodic billiard trajectories}%

\author{Marco Mazzucchelli}%

\address{Department of Mathematics, Penn State University, University Park, PA 16802, USA}%
\email{mazzucchel@math.psu.edu}%

%\thanks{}%
\subjclass[2000]{37J45, 55R80}%
\keywords{Billiards, Morse theory, Iteration of periodic trajectories}%

\date{October 3, 2010. \emph{Revised}: January 19, 2011}%
%\dedicatory{ }%
%\commby{ }%
% -----------------------------------------

\maketitle
\begin{abstract}
We introduce the iteration theory for periodic billiard trajectories in a compact and convex domain of the Euclidean space, and we apply it  to establish a multiplicity result for non-iterated trajectories.
\end{abstract}
\begin{quote}
\begin{footnotesize}
\tableofcontents
\end{footnotesize}
\end{quote}

\section{Introduction}

Billiard dynamics describes the motion of a particle moving without friction in a compact domain of $\R^{N+1}$, for $N\geq1$, while being subject to a singular potential which is identically zero in the interior of the domain and $+\infty$ on the boundary. This implies that the particle moves on straight lines with constant speed until it reaches the boundary of the domain, where it bounces with specular reflection and with no loss of energy. In this paper, we investigate the multiplicity of certain periodic billiard trajectories in strictly convex domains of $\R^{N+1}$ enclosed by a smooth hypersurface, which is therefore diffeomorphic to the unit sphere $S^N$. Throughout the paper, by strict convexity we will always mean that the second fundamental form of the boundary is everywhere positive definite.

Historically, the first multiplicity result for periodic billiard trajectories was proved by Birkhoff \cite{Bi} for convex plane (i.e.\ $N=1$) billiards.  The result asserts that, for each pair of coprime positive integers $n\geq2$ and $r\leq n/2$, there are at least two distinct periodic billiard trajectories with $n$ bounce points and rotation number $r$. The plane case is special, since the billiard dynamics can be described by means of an area preserving twist-map on the annulus, and nowadays Birkhoff's Theorem can be proved by means of Aubry-Mather theory, see~\cite{Ba88} and references therein. 
For higher dimensional billiards the problem is essentially harder, and estimates for the number of bounce trajectories with prescribed number of bounce points have been proved by Farber and Tabachnikov~\cite{FaTa, FaTa02b}, who corrected and  extended an earlier proof by Babenko~\cite{Bab}. Further extensions, together with more sophisticated multiplicity results, have been proved by Farber~\cite{Fa} and by Karasev~\cite{Kar09}. Roughly, these results can be summarized by saying that, for each $n$ odd, the number of periodic billiard trajectories whose number of bounce points divides $n$ grows at least linearly in $n$. 

The arguments in these papers are based on critical point theory. In fact, billiard trajectories can be characterized by a variational principle, which in the periodic case goes as follows. If $S$ is the smooth boundary of our convex billiard table and $n\in\N$, the length functional  $\length n$, defined on the $n$-fold cross product of $S$, computes the perimeter of the $n$-gon inscribed by a given sequence of points $\qq=(q_0,...,q_{n-1})\in S^{\times n}$. This functional is clearly continuous, and actually smooth when restricted to the so-called cyclic configuration space $\conf n(S)$, the space of  those $\qq$'s such that $q_j\neq q_{j+1}$ for all $j\in\Z_n$. The critical points $\qq\in\conf n(S)$ of $\length n$ are precisely the $n$-periodic sequences of bounce points of billiard trajectories. Notice that the dihedral group $\D_n$ acts by permutations on the cyclic configuration space $\conf n(S)$, and all the points that belong to a same orbit of its action refer to the same geometric closed curve.

Now, the lack of compactness of $\conf n(S)$ does not represent a real obstacle for applying the machinery of critical point theory with the length functional $\length n$ or, more precisely, with the functional $-\length n$ (see~\cite[Section~4]{FaTa} or the paragraph after Proposition~\ref{p:FaTa}). In particular, each $\D_n$-equivariant homology or cohomology class of   $\conf n(S)$ gives rise to a billiard periodic trajectory (as critical point of $\length n$) and, at least in the non-degenerate case, linearly independent classes produce different critical points. In~\cite{FaTa}, the equivariant cohomology algebra $\Hom^*_{\D_n}(\conf n(S);\Z_2)$ has been computed for each $n$ odd. Its rank  gives, in the non-degenerate case, a lower bound for the number of periodic billiard trajectories whose number of bounce points divides $n$. In the degenerate case, by Lusternik-Schnirelmann theory, such a lower bound is given by the cup-length of $\Hom^*_{\D_n}(\conf n(S);\Z_2)$ plus one.

In this paper, we proceed along a different line: the result we prove concerns the multiplicity of periodic orbits whose number of bounce points lies in the set of powers of a given prime number $p$. Our proof will use a minimum amount of information on the homology of the configuration space: we will only need that,  for each $n\in\N$, the homology group $\Hom_{*}(\conf n(S);\Z_2)$ is nontrivial in degree $N-1$, where $N=\dim(S)$. Via Morse theory, for each $n\in\N$, a nonzero homology class in $\Hom_{N-1}(\conf n(S);\Z_2)$ generates a periodic billiard trajectory $\gamma_n$ whose number of bounce points divides $n$. More precisely, if the number of its bounce points is $n/m$, the generated critical point $\qq_n=(q_{n,0},...,q_{n,n})\in\conf n(S)$ of $\length n$ is the sequence of bounce points of  the $m$-fold iteration of $\gamma_n$. Since we only make use of the homology of $\conf n(S)$ in degree $N-1$, all the $\gamma_n$'s (i.e.\ all the associated critical points of the length functionals $\length n$) have Morse coindex less than or equal to $N-1$ and are not local maxima for $\length n$.

The main issue here is to prove that, varying $n$, we obtain infinitely many distinct trajectories $\gamma_{p^n}$. In order to prove this, we develop an iteration theory for billiard periodic trajectories, a discrete version of the one for closed geodesics (see e.g.~\cite{Bo, GM_geod, BK}), that may also have independent interest. More specifically, we investigate the behavior of the Morse indices and of the local homology of billiard periodic trajectories under iteration, and we prove a discrete version of Bangert and Klingenberg's homological vanishing under iteration, a tool that can be used in certain situations to assert the existence of infinitely many closed geodesics.

The main result of the paper is the following.

\begin{thm}\label{t:main}
Consider the billiard dynamics inside a strictly  convex subset enclosed by a smooth hypersurface of $\R^{N+1}$, where $N\geq 2$. 
For each prime number $p$ at least one of the following two statements is satisfied: 
\begin{itemize}
\item there is some $n\in\N$ such that infinitely many periodic billiard trajectories bounce $p^{n}$ times;
\item there is a sequence $\{\gamma_\alpha\,|\,\alpha\in\N\}$ of geometrically distinct periodic billiard trajectories such that:
\begin{itemize}
\item[(i)] each $\gamma_\alpha$ bounces $p^{n_\alpha}$ times for some $n_\alpha\in\N$, 
\item[(ii)] each $\gamma_\alpha$ has Morse coindex less than or equal to $N$ and it is not a local maximum for the length functional $\length{p^{n_\alpha}}$.
\end{itemize}
\end{itemize}
\end{thm}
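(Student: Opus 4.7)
The natural approach is to extract, for each $n\in\N$, a periodic trajectory $\gamma_{p^n}$ from a nonzero homology class in $\Hom_{N-1}(\conf{p^n}(S);\Z_2)$ as sketched in the introduction, and then argue by cases on how these trajectories assemble geometrically. Write $p^{k_n}$ for the number of bounce points of $\gamma_{p^n}$, so that $\qq_{p^n}$ is the $p^{n-k_n}$-fold iterate of the bounce sequence of $\gamma_{p^n}$. Assume for contradiction that the first alternative of the theorem fails, i.e., that for every $n$ only finitely many periodic trajectories bounce $p^n$ times; the task is then to produce the sequence described in the second alternative.

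Dichotomize the family $\{\gamma_{p^n}\}_{n\in\N}$ modulo geometric equivalence. In the case that infinitely many geometrically distinct trajectories appear, the bounce numbers $p^{k_n}$ must be unbounded, since otherwise the finitely many bounce counts could accommodate only finitely many distinct trajectories under our standing assumption. Extract a subsequence $\gamma_\alpha := \gamma_{p^{n_\alpha}}$ of geometrically distinct trajectories with strictly increasing $k_{n_\alpha}$; relabeling $n_\alpha := k_{n_\alpha}$, clause (i) holds automatically with bounce count $p^{n_\alpha}$. For (ii) one must compare the coindex and the non-local-maximum condition at the known iterate $\qq_{p^{n_\alpha}}$ with the corresponding quantities for the bounce sequence of $\gamma_\alpha$ viewed as critical point of its prime length functional $\length{p^{n_\alpha}}$; this is a task for the discrete iteration theory developed in the paper. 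A Bott-type decomposition of the iterated Hessian into rotation eigenspaces together with a transfer of the relevant local homology class should yield the coindex bound $\leq N$, where the single extra unit (relative to the bound $\leq N-1$ known at the iterate) reflects the unavoidable defect coming from a null eigenvector that can become negative upon de-iteration, and should likewise deliver the non-local-maximum property from the corresponding property at the iterate.

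In the remaining case that only finitely many geometrically distinct trajectories appear among $\{\gamma_{p^n}\}$, some fixed prime trajectory $\gamma$ with $p^k$ bounce points satisfies $\gamma_{p^n} = \gamma$ for all $n$ in an infinite set $M \subset \N$, so $\qq_{p^n}$ is the $p^{n-k}$-fold iterate of the bounce sequence of $\gamma$. The Morse-theoretic generation of $\gamma_{p^n}$ from the $(N-1)$-class then forces the $\D_{p^n}$-equivariant local homology of $\length{p^n}$ at the critical orbit of this iterate to be nontrivial in degree $N-1$ for every $n\in M$. The decisive step, and the main obstacle of the entire argument, is to derive a contradiction from this persistence: one needs a discrete analogue of the Bangert--Klingenberg homological vanishing theorem, asserting that for prime-power iterates of a single prime billiard trajectory the relevant local homology must vanish for all sufficiently large iterates. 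Proving such a discrete vanishing requires a careful analysis of the equivariant spectrum of the iterated Hessian under the dihedral symmetry, and is precisely the technical heart of the paper.
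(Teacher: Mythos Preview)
Your argument has a genuine gap in the ``finitely many distinct trajectories'' branch, stemming from a misreading of what the Bangert--Klingenberg vanishing principle actually asserts.

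You write that the vanishing theorem should say ``the relevant local homology must vanish for all sufficiently large iterates.'' It does not. Proposition~\ref{p:hom_vanish} says that the \emph{homomorphism}
\[
\itmap m_*:\Loc_*(\qq)\to\Hom_*\bigl(\cconf{nm}\epsilon(S)_{>mb},\cconf{nm}\epsilon(S)_{>mc}\bigr)
\]
is zero for all sufficiently large $m$. The local homology groups $\Loc_{N-1}(\qq\iter m)$ themselves need not vanish; on the contrary, Proposition~\ref{p:iter_iso} shows that whenever the coindex and nullity are preserved under iteration, $\itmap m_*:\Loc_*(\qq)\to\Loc_*(\qq\iter m)$ is an \emph{isomorphism}. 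So the persistence of $\Loc_{N-1}(\qq\iter m)\neq 0$ that you obtain is entirely compatible with the vanishing principle, and no contradiction follows.

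The paper's proof obtains the contradiction by a different mechanism. It argues by contradiction assuming \emph{both} alternatives fail simultaneously: your assumption \textbf{(F1)}, together with \textbf{(F2)} that only finitely many trajectories $\gamma_1,\dots,\gamma_r$ satisfy (i)--(ii). The global hypothesis \textbf{(F2)} is what makes Claim~\ref{claim:2} go through: in the long exact sequence of the triple
\[
\bigl(\cconf{nm}\epsilon(S)_{>mb},\ \cconf{nm}\epsilon(S)_{>b'},\ \cconf{nm}\epsilon(S)_{>mc}\bigr),
\]
the relative groups $\Hom_{N-1}$ and $\Hom_N$ of the pair $(\cconf{nm}\epsilon(S)_{>mb},\cconf{nm}\epsilon(S)_{>b'})$ vanish precisely because \textbf{(F2)} guarantees that every critical point with value in $(mb,b']$ either has coindex $>N$ or is a local maximum. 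This yields the \emph{injectivity} of $\itmap m_*$ into the sublevel-pair homology, and only then does ``injective'' $+$ ``eventually zero'' force $\Loc_{N-1}(\qq)=0$, contradicting Claim~\ref{claim:1}. Your dichotomy --- finitely many distinct trajectories \emph{among the extracted $\gamma_{p^n}$} --- is strictly weaker than \textbf{(F2)}: it says nothing about other critical points of small coindex that were not selected by your minimax, and those can obstruct the exact-sequence argument.

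Two smaller remarks. Your discussion of a ``$+1$ defect'' in the coindex upon passing from the iterate to the prime has the monotonicity reversed: one always has $\coind(\qq)\le\coind(\qq\iter m)$ (see Remark~\ref{r:only_q_alpha}), so if the iterate has coindex $\le N-1$ then the prime does too, with no loss. And your transfer of ``not a local maximum'' from the iterate to the prime is asserted without justification; this direction is not automatic, and the paper avoids the issue altogether by folding it into the contradiction hypothesis \textbf{(F2)}.
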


We stress that Theorem~\ref{t:main} does not follow from the mere knowledge of the cohomology of the cyclic configuration space (and thus, it does not follow from \cite[Theorem~1]{FaTa}). In fact, the Poincar\'e polynomial associated to $\Hom^*_{\D_n}(\conf n(S);\Z_2)$ is given by
\begin{align*}
P_{N,n}(t)
&=
\frac{(t^{(n-1)(N-1)}-1)(t^N-1)(t^N+1)}{(t^{2(N-1)}-1)(t-1)}\\
&=
\cbra{
\sum_{j=0}^{(n-3)/2}
t^{2(N-1)j}
}
\cbra{
\sum_{k=0}^{N-1}
t^{k}
}
(t^N+1)
\end{align*}
provided $N=\dim(S)\geq3$ and $n$ is odd, see\footnote{In~\cite[Theorem~7]{FaTa} the formula of the Poincar\'e polynomial contains a typo.} \cite[Theorem~7 and Proof of Proposition~4.5]{FaTa}. Even in the non-degenerate case, by $\D_n$-equivariant Morse theory, a lower bound for the number of billiard periodic trajectories having Morse coindex less than or equal to $N$ and number of bounce points that divides $n$ is only given by
\begin{align*}
\sum_{j=0}^N
\dim
\Hom^j_{\D_n}(\conf n(S);\Z_2)
&=
P_{N,n}(0)+
P_{N,n}'(0)+
\sfrac{1}{2} P_{N,n}''(0)+
...
+
\sfrac{1}{N!} P_{N,n}^{(N)}(0)\\
&=
N+1.
\end{align*}
The iteration theory developed in the current paper  is thus needed to conclude that, by varying $n$ in the set of powers of any given prime number, the $n$-periodic billiard  trajectories that are found by Morse theory are not all iterations of a finite numbers of lower periodic  ones.

Beside the considerations on the Morse coindex, by choosing $p=2$ in Theorem~1.1 we obtain that every convex billiard table admits infinitely many (geometrically distinct) periodic billiard trajectories whose numbers of bounce points are powers of $2$. To the best of the author's knowledge, this assertion does not follows from any known multiplicity result in the literature.

\subsection{Organization of the paper}
In section~\ref{s:preliminaries} we recall the basic definitions concerning billiards and the variational principle for periodic billiard trajectories. In section~\ref{s:iter} we introduce the iteration theory for periodic billiard trajectories: in subsection~\ref{s:iteration_Morse} we discuss the behavior of the Morse indices under iteration, while  in subsection~\ref{ss:loc_hom} and~\ref{ss:hom_vanish} we investigate the behavior of the local homology groups of periodic billiard trajectories under iteration. Section~\ref{s:proof} is devoted to the proof of Theorem~\ref{t:main}.

\subsection{Acknowledgments} The author wishes to thank Alberto Abbondandolo,  Sergei Tabachnikov and the anonymous referee for encouraging  and for useful remarks on a preliminary version of the paper. This research has been supported by the Max Planck Institute for Mathematics in the Sciences (Leipzig, Germany) and by the ANR project ``KAM faible''.

%-------------------------------------------------

\section{Preliminaries}\label{s:preliminaries}

Throughout this paper, $S$ will be a smooth  hypersurface in $\R^{N+1}$, where $N\in\N=\{1,2,3,...\}$, enclosing a  compact and strictly convex domain $U_S$. Hereafter, strict convexity must be intended in the differentiable sense: the second fundamental form of $S$ is everywhere positive definite. In particular, $S$ is diffeomorphic to the unit $N$-sphere $S^N\subset\R^{N+1}$. We are interested in the billiard dynamics in  $U_S$. A curve $\gamma$ inside $U_S$ is a \textbf{billiard trajectory} if it is a piecewise straight curve with constant speed $|\dot\gamma|$, except at the instants $t$ in which it hits the hypersurface $S$, where it bounces according to the usual law of reflection: the component of the velocity that is normal to the boundary instantaneously changes sign, whereas the tangential component is preserved. A billiard trajectory $\gamma$ is \textbf{periodic} with period $T>0$ if it is a curve of the form $\gamma:\R/T\Z\to U_S$.

Billiard periodic orbits are characterized by a well-known variational principle which we are going to recall. For each $n\in\N$, let us denote by $S^{\times n}$ the $n$-fold product $S\times ...\times S$. We consider the open subset $\conf n(S)\subset S^{\times n}$ given by 
\[ 
\conf n(S)=\gbra{ \qq=(q_0,...,q_{n-1})\in S^{\times n}\,|\, q_j\neq q_{j+1}\ \forall j\in\Z_n }, 
\]
which we will refer to as the \textbf{cyclic configuration space} (or simply the \textbf{configuration space}). The dihedral group $\D_n$, seen as a group of permutations of $\Z_n$, acts on $\conf n(S)$ by
\begin{align*}
\sigma\cdot(q_0,...,q_{n-1})=(q_{\sigma(0)},...,q_{\sigma(n-1)}),\s\s\forall\qq\in\conf n(S), \sigma\in\D_n. 
\end{align*} 
For each $\qq\in\conf n(S)$, we denote by $\gamma_{\qq}$ the unique curve in $U_S$ with prescribed speed (say, parametrized by arc-length) that is piecewise straight and bounces periodically in the points $q_0,...,q_{n-1}$ (see Figure~\ref{f:notation}). Notice that each point in the $\D_n$-orbit of $\qq$ is associated to the same geometric curve $\gamma_{\qq}$.

\frag[s]{S}{$S$}%
\frag[s]{0}{$q_0$}%
\frag[s]{1}{$q_1$}%
\frag[s]{2}{$q_2$}%
\frag[s]{3}{$q_3$}%
\frag[s]{g}{$\gamma_{\qq}$}%

\begin{figure}
\begin{center}
\includegraphics{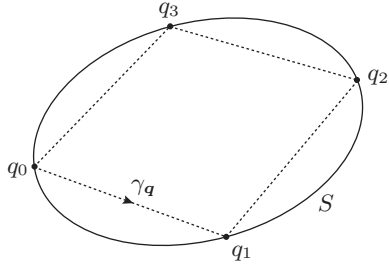}
\end{center}
\captionstyle{myCenter}
\caption{Closed curve $\gamma_{\qq}$ and corresponding sequence of bounce points $\qq=(q_0,...,q_3)$.}
\label{f:notation}
\end{figure}

We denote by $\length n:S^{\times n}\to \R$ the \textbf{length functional} de\-fined by
\[ \length n(\qq)=\sum_{j\in\Z_n} |q_{j+1}-q_j|,\s\s\forall \qq=(q_0,...,q_{n-1})\in S^{\times n},  \]
namely, $\length n(\qq)$ is the length of the closed curve $\gamma_{\qq}$.  This functional is continuous, and  it is smooth on the configuration space $\conf n(S)$, whereas  is not even differentiable in the complement  of $\conf n(S)$ (since norms are not differentiable at the origin). A straightforward computation shows that
\begin{align*}
\diff \length n(\bm q)\,\bm v
=
\sum_{j\in\Z_n}
\langle
\underbrace{\sfrac{q_j-q_{j-1}}{|q_j-q_{j-1}|}
-
\sfrac{q_{j+1}-q_j}{|q_{j+1}-q_j|}}_{q_j'}
,
v_j
\rangle,\s\s
\forall\qq\in\conf n(S), \vv\in\Tan_{\qq}S^{\times n}.
\end{align*}
A point $\qq\in\conf n(S)$ is then a critical point of the length functional $\length n$ if and only if, for each $j\in\Z_n$, the associated point $q_j'\in\R^{N+1}$ is orthogonal to the tangent space $\Tan_{q_j}S$. This amounts to requiring that the curve $\gamma_{\qq}$ satisfies the  reflection law at the bounce points, and therefore $\qq$ is a critical point of $\length n$ if and only $\gamma_{\qq}$ is a  periodic billiard trajectory.

Having this variational principle, one is tempted to study the multiplicity of periodic billiard trajectories with a prescribed number $n$ of bounce points by means of critical point theory, more specifically by means of Morse theory or Lusternik-Schnirelmann theory for the length functional $\length n:\conf n(S)\to\R$. However, one immediately faces the problem of the lack of compactness of the configuration space $\conf n(S)$. A possible solution has been suggested by Farber and Tabachnikov \cite{FaTa}, who extended earlier works in \cite{CrSw, Bab, KoTr} for the two-dimensional case. The idea is to restrict $\length n$ to the compact subspace $\cconf n\epsilon(S)\subset\conf n(S)$, where $\epsilon>0$ and 
\begin{align}\label{e:cconf}
\cconf n\epsilon(S)=\biggl\{ \qq\in\conf n(S)\,\biggr|\, \prod_{j\in\Z_n} |q_j-q_{j-1}|\geq\epsilon^n  \biggr\},
\end{align}
by virtue of the following statement.

\begin{prop}[Proposition~4.1 in \cite{FaTa}]\label{p:FaTa}
For each sufficiently small $\epsilon>0$ the following claims hold:
\begin{itemize}
\item[(i)] $\cconf n\epsilon(S)$ is a smooth manifold with boundary;
\item[(ii)] the inclusion $\cconf n\epsilon(S)\subset\conf n(S)$ is a homotopy equivalence;
\item[(iii)] all the critical points of $\length n:\conf n(S)\to\R$ are contained in $\cconf n\epsilon(S)$;
\item[(iv)] at every point of $\partial\cconf n\epsilon(S)$, the gradient of $\length n$ points inward.

\hfill$\qed$
\end{itemize}
\end{prop}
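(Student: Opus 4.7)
Define $F : \conf n(S) \to (0,\infty)$ by $F(\mathbf{q}) = \prod_{j \in \Z_n} |q_j - q_{j-1}|$, so that $\cconf n\epsilon(S) = F^{-1}([\epsilon^n,\infty))$. My plan is to establish the following \emph{uniform rigidity}: there exists $\epsilon_0 > 0$ such that, for every $\mathbf{q} \in \conf n(S)$ with $F(\mathbf{q}) \leq \epsilon_0^n$, the configuration $\mathbf{q}$ is neither a critical point of $F$ nor of $\length n$, and the nonzero gradients $\nabla F(\mathbf{q})$ and $\nabla \length n(\mathbf{q})$ form a strictly acute angle. Once this is in hand, the four claims follow: (i) because $\epsilon^n$ is then a regular value of $F$; (iii) immediately; (iv) because the inward unit normal to $\partial \cconf n\epsilon(S)$ is $\nabla F/|\nabla F|$, with which $\nabla \length n$ has positive inner product; and (ii) by deformation retract along a compactly-supported upward modification of $\nabla F$ on $\{F \leq \epsilon_0^n\}$, whose flow is unobstructed thanks to the absence of critical values of $F$ in $(0,\epsilon_0^n]$.

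The rigidity rests on two geometric inputs from strict convexity. First, a \emph{chord-angle estimate}: smoothness of $S$ together with positive definiteness of the second fundamental form furnishes a constant $C>0$ such that, for every $p,p'\in S$, the chord $p'-p$ makes an angle at most $C|p-p'|$ with $\Tan_p S$, and conversely any segment leaving $p$ into $U_S$ at angle $\theta$ with $\Tan_p S$ hits $S$ again at distance at least $\theta/C$. Second, a \emph{propagation principle}: if $\mathbf{q}$ is critical for either $\length n$ or $\log F$, the Lagrange equation reads
\begin{align*}
\frac{q_j - q_{j-1}}{|q_j-q_{j-1}|^\alpha} - \frac{q_{j+1} - q_j}{|q_{j+1}-q_j|^\alpha} \;\in\; (\Tan_{q_j} S)^\perp, \qquad \alpha \in \{1,2\}.
\end{align*}
If a single edge $|q_j - q_{j-1}|$ is small, the direct chord-angle estimate forces $\tfrac{q_j - q_{j-1}}{|q_j - q_{j-1}|}$ to be nearly tangent at $q_j$; the orthogonality condition then forces $\tfrac{q_{j+1} - q_j}{|q_{j+1} - q_j|}$ to share the same tangential component (the reflection law), hence to be nearly tangent too; and the converse chord-angle estimate makes $|q_{j+1} - q_j|$ small as well. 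Iterating around $\Z_n$, one short edge propagates to all $n$ edges at comparable scale.

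To derive the rigidity I would argue by contradiction via blow-up. A sequence $\mathbf{q}^{(k)}$ of critical configurations with $F(\mathbf{q}^{(k)}) \to 0$ would, by propagation, have all edges tending to $0$, so $\mathbf{q}^{(k)}$ would concentrate at a single point $q_* \in S$. In Monge coordinates at $q_*$, rescaling the in-boundary coordinates by the maximal edge length $\rho_k$ and the transverse coordinate by $\rho_k^2$, the hypersurface $S$ converges to the osculating paraboloid determined by the second fundamental form, and the rescaled configurations converge to an $n$-periodic orbit of the paraboloid billiard whose maximal in-plane coordinate equals $1$. A direct analysis of the limiting reflection law, combined with the integrable structure of billiards in a quadric, excludes such a limit orbit at arbitrarily small scale, producing the desired contradiction and supplying $\epsilon_0$.

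Part (iv) is then a direct calculation: at a boundary configuration every edge is of order $\epsilon$ by propagation, and the variation $v_j \in \Tan_{q_j} S$ obtained by projecting $\tfrac{q_j - q_{j-1}}{|q_j - q_{j-1}|}$ onto $\Tan_{q_j} S$ increases $\log F$ at rate of order $\epsilon^{-1}$ and $\length n$ at rate of order $1$, whence $\nabla F \cdot \nabla \length n > 0$. Part (ii) then uses the gradient flow of a cutoff of $F$ to deformation retract $\conf n(S)$ onto $\cconf n\epsilon(S)$. The main obstacle is the blow-up step: the propagation principle by itself only shows that short edges come in bunches, and upgrading this into a genuine quantitative obstruction requires identifying the scaling limit as a paraboloid billiard and carefully exploiting the integrability of that model to preclude degenerating $n$-periodic orbits at vanishing scale.
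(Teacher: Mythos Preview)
The paper does not prove this proposition: it is quoted from \cite{FaTa} and closed with a \qed\ immediately after the statement, so there is no argument in the present paper to compare your proposal against.

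As for the proposal itself, the overall architecture is reasonable, but two steps have genuine gaps.

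In your treatment of~(iv) you assert that ``at a boundary configuration every edge is of order $\epsilon$ by propagation''. But your propagation principle rests on the Lagrange condition at each vertex, and a generic point of $\partial\cconf n\epsilon(S)=\{F=\epsilon^n\}$ is not a critical point of either $\length n$ or $F$. Knowing only that the \emph{product} of the edge lengths equals $\epsilon^n$ says nothing about individual edges: one may have a single edge of order $\epsilon^n$ while the remaining $n-1$ are of order $\mathrm{diam}(S)$. Your subsequent variation estimate, which presupposes all edges comparable to $\epsilon$, therefore does not apply. (There is also a logical issue: exhibiting a single tangent vector along which both $F$ and $\length n$ increase does not by itself give $\langle\nabla F,\nabla\length n\rangle>0$.) The positivity on the boundary has to come from a direct computation that isolates the contribution of the shortest edge, which is essentially what the argument in \cite{FaTa} does.

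In your blow-up step you conclude by invoking ``the integrable structure of billiards in a quadric'' to rule out the limiting configuration. This is not an argument: you have not shown that the limiting paraboloid lacks $n$-periodic billiard orbits (for critical points of $\length n$) or $n$-periodic critical points of the product functional (for critical points of $F$, which satisfy the $\alpha=2$ equation and are \emph{not} billiard orbits), and integrability of quadric billiards does not by itself furnish such an obstruction. You yourself flag this as ``the main obstacle'', and it remains one: a direct quantitative lower bound on the edge lengths of an $n$-periodic critical configuration is what is actually needed, and once one has it the blow-up apparatus is superfluous.
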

This proposition guarantees that, for any $\epsilon>0$ sufficiently small, the functional $\length n:\cconf n\epsilon(S)\to\R$ satisfies the so-called ``general boundary conditions'' for Morse theory, see \cite[Section~6.1]{Ch}. Moreover, if we perform Morse theory using the gradient flow of $\length n$ (instead of its anti-gradient flow, as it would be more common), point (iv) of the proposition implies that the boundary of $\cconf n\epsilon(S)$ does not enter into play while applying the principles of Morse theory.

The homology and cohomology of cyclic  configuration spaces have been studied by many authors, see e.g.~\cite{Ar, Coh1, Coh2, FaTa}. In particular, in~\cite{FaTa} the cohomology and the $\D_n$-equivariant cohomology rings of $\conf n(S)$ with $\Z_2$ coefficients have been completely determined. For our purposes, we only need to recall that the \textbf{Poincar\'e polynomial} of $\Hom_*(\conf n(S);\Z_2)$, namely the polynomial
\begin{align*}
B_{N,n}(t)=\sum_{j=0}^{nN} \dim \Hom_j(\conf n(S);\Z_2)\,t^j,
\end{align*}
is equal to
\begin{align}\label{e:Poincare_polynomial}
B_{N,n}(t)
&=
\frac{(t^N+1)(t^{(n-1)(N-1)}-1)}{t^{N-1}-1}
=
(t^N+1)
\cbra{
\sum_{j=0}^{n-2}
t^{(N-1)j}
}
,
\end{align}
see \cite[Theorem~4 and Remarks~3.2 and~3.3]{FaTa}.

%--------------------------------------------------

\section{Iteration theory for  periodic billiard  trajectories}\label{s:iter}

\subsection{Morse indices of iterated periodic billiard  trajectories}\label{s:iteration_Morse}

For each $n,m\in\N$, consider the embedding $\itmap m:S^{\times n}\hookrightarrow S^{\times nm}$ given by
\[ \itmap m(\qq)=\qq\iter m:=(\underbrace{\bigl.\qq,...,\qq}_{\times m}),\s\s\forall \qq\in S^{\times n}. \]
This map is clearly smooth (being the restriction of the linear ``diagonal'' embedding $\R^{nN}\hookrightarrow\R^{nmN}$), and it restricts as a map $\conf n(S)\hookrightarrow\conf {nm}(S)$ that we will still denote by $\itmap m$. This latter map has a clear interpretation in terms of piecewise straight closed curves associated to the points of the configuration spaces: if $\gamma_{\qq}$ and $\gamma_{\qq\iter m}$ are the closed curves associated to $\qq$ and $\qq\iter m$ respectively, then $\gamma_{\qq\iter m}$ is the $m$-fold iterate of $\gamma_{\qq}$. For this reason we call  $\itmap m$ the ($m$-fold) \textbf{iteration map}.

By the characterization of the critical points of the length functional as bounce points of  periodic billiard trajectories, it is clear that $\qq\in\conf n(S)$ is a critical point of $\length n$ if an only if, for some (and thus for all) $m\in\N$, its iteration $\qq\iter m$ is a critical point of $\length{nm}$. We denote by $\ind(\qq)$, $\coind(\qq)$ and $\nul(\qq)$ the Morse index, the Morse coindex and the nullity of the length functional $\length n$ at $\qq$. We recall that these are nonnegative  integers  defined respectively as the dimension of the negative eigenspace, of the positive eigenspace and of the kernel of the Hessian of $\length n$ at $\qq$. In this section we investigate the properties of the sequences $\{\ind(\qq\iter m)\,|\,m\in\N\}$, $\{\coind(\qq\iter m)\,|\,m\in\N\}$ and $\{\nul(\qq\iter m)\,|\,m\in\N\}$. This of course is reminiscent of the iteration theory for the Morse indices of closed geodesics, which has  essentially been pioneered by Bott in his celebrated paper \cite{Bo}, and further extended by many authors also to more general Maslov-type indices, see \cite[Part~IV]{Lo_book} and the bibliography therein for a detailed account. The results that we are going to present can be considered as discrete-time versions of those contained in \cite[Section~1]{Bo}. In view of the application to the multiplicity of periodic billiard trajectories in section~\ref{s:proof} we draw, as a consequence of this iteration theory, the following iteration inequalities analogous to the one established by Liu and Long in \cite{LLo1,LLo2} (see also \cite[page~213]{Lo_book}) for a Maslov-type index.

\begin{prop}[Iteration inequalities]
\label{p:iteration_inequalities}
For each critical point $\qq$ of $\length n$, the following claims hold.
\begin{itemize}

\item[(i)] The nullity of the iterations of $\qq$ is  uniformly bounded by $2N$, i.e.
\[\nul(\qq\iter m)\leq2N.\]

\item[(ii)] There is a nonnegative real number $\avind(\qq)$, the \textnormal{\textbf{mean Morse index}} of $\length n$ at $\qq$, defined by
\[ 
\avind(\qq)=\lim_{m\to\infty} \sfrac1m\, \ind(\qq\iter m),\s\s
\]
such that the following inequalities are verified:
\begin{align*}
%\label{e:ind_iter1}
m\,\avind(\qq) - 2N  \leq   \ind(\qq\iter m) \leq
m\,\avind(\qq) + 2N - \nul(\qq\iter m) .
\end{align*}
Analogously, there is a nonnegative real number $\avcoind(\qq)$, the \textnormal{\textbf{mean Morse coindex}} of $\length n$ at $\qq$, defined by
\[ 
\avcoind(\qq)=\lim_{m\to\infty} \sfrac1m\, \coind(\qq\iter m),\s\s
\]
such that the following inequalities are verified:
\begin{align*}
%\label{e:coind_iter1}
m\,\avcoind(\qq) - 2N  \leq    \coind(\qq\iter m) \leq
m\,\avcoind(\qq) + 2N - \nul(\qq\iter m).
\end{align*}
\end{itemize}
\end{prop}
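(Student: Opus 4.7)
The plan is to adapt Bott's iteration theory for closed geodesics~\cite{Bo} to this discrete setting. The cyclic group $\mathbb{Z}_m$ acts on $\conf{nm}(S)$ by shifts of $n$ positions, fixing $\qq\iter m$, so the Hessian of $\length{nm}$ at $\qq\iter m$ commutes with the induced shift on the complexified tangent space $\Tan_{\qq\iter m}S^{\times nm}\otimes\mathbb{C}$. This complexified space splits as a direct sum $\bigoplus_{\omega^m=1}V_\omega$ of shift-eigenspaces, each of complex dimension $nN$, indexed by the $m$-th roots of unity, and the Hessian restricts to each $V_\omega$ as a Hermitian form $H_\omega$ depending analytically on $\omega\in S^1$. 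Keeping track of the fact that complex conjugation interchanges $V_\omega$ and $V_{\bar\omega}$, one obtains the Bott-type formulas
\begin{align*}
\ind(\qq\iter m) = \sum_{\omega^m=1}\ind(H_\omega),\qquad
\coind(\qq\iter m) = \sum_{\omega^m=1}\coind(H_\omega),\qquad
\nul(\qq\iter m) = \sum_{\omega^m=1}\nul(H_\omega).
\end{align*}

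For claim (i), I would identify $\nul(H_\omega)$ with the $\omega$-eigenspace of the linearized billiard Poincar\'e map $P$ associated with $\gamma_{\qq}$, which is a symplectic automorphism of a $2N$-dimensional vector space. Since eigenspaces of $P$ for distinct eigenvalues are linearly independent,
\begin{align*}
\nul(\qq\iter m)=\sum_{\omega^m=1}\dim\ker(P-\omega I)\leq 2N.
\end{align*}
In particular, the set $\Lambda=\{\omega\in S^1\mid\nul(H_\omega)>0\}$ is finite and $\sum_{\omega\in\Lambda}\nul(H_\omega)\leq 2N$.

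For claim (ii), the functions $\omega\mapsto\ind(H_\omega)$ and $\omega\mapsto\coind(H_\omega)$ are integer-valued and piecewise constant on $S^1$, with jumps confined to $\Lambda$. I would define
\begin{align*}
\avind(\qq)=\frac{1}{2\pi}\int_0^{2\pi}\ind(H_{e^{i\theta}})\,d\theta,\qquad
\avcoind(\qq)=\frac{1}{2\pi}\int_0^{2\pi}\coind(H_{e^{i\theta}})\,d\theta,
\end{align*}
so that $\sfrac1m\sum_{\omega^m=1}\ind(H_\omega)$ is an equispaced Riemann sum for $\avind(\qq)$, and analogously for $\avcoind(\qq)$, which already yields the existence of the limits in the statement. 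For the inequalities themselves, I would invoke the lower semicontinuity of $\ind(H_\omega)$ and the upper semicontinuity of $\ind(H_\omega)+\nul(H_\omega)$ (and the analogous properties of $\coind(H_\omega)$) to show that at each $\omega_0\in\Lambda$ the absolute value of the jump of $\ind$ is bounded by $\nul(H_{\omega_0})$, and that $\ind(H_{\omega_0})$ itself lies below the two adjacent arc values by at most $\nul(H_{\omega_0})$. The main technical obstacle is the careful treatment of those $m$-th roots of unity that happen to lie in $\Lambda$: for such $\omega$, the value $\ind(H_\omega)$ may be strictly smaller than the values on the adjacent arcs, and the accumulated discrepancy, controlled by $\sum_{\omega\in\Lambda,\,\omega^m=1}\nul(H_\omega)=\nul(\qq\iter m)$, produces precisely the correction term $-\nul(\qq\iter m)$ appearing in the upper bound, in the spirit of the Maslov-type iteration inequalities of Liu and Long~\cite{LLo1,LLo2}.
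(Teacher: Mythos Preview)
Your proposal is correct and follows essentially the same route as the paper. The paper makes exactly your decomposition explicit: it introduces the spaces $\V_{1,w}$ (your $V_\omega$) and proves the Bott-type sum formulas via Fourier expansion (Proposition~\ref{p:Fourier}); it realizes $\nul(H_\omega)$ as the $\omega$-eigenspace of a linear map $\Phi$ on the $2N$-dimensional space $\Tan_{q_0}S\oplus\Tan_{q_1}S$ (Proposition~\ref{p:Poincare}), which is your linearized Poincar\'e map; and it establishes the piecewise constancy and lower semicontinuity of $z\mapsto\ind_z(\qq)$ with jumps bounded by $\nul_z(\qq)$ (Proposition~\ref{p:ind_z,coind_z}). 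The only minor difference is in the final step: rather than tracking the Riemann-sum discrepancy arc by arc, the paper derives the single two-variable inequality $\ind_w(\qq\iter m)+\nul_w(\qq\iter m)\leq\ind_z(\qq\iter m)+2N$ for all $w,z\in S^1$, then sets one variable to $1$ and integrates the other, which cleanly separates the $2N$ (coming from the total nullity over \emph{all} Poincar\'e points) from the $-\nul(\qq\iter m)$ correction (coming from $w=1$ being a Poincar\'e point of $\qq\iter m$)---your last paragraph slightly blurs this distinction.
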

The proof of this proposition will be carried out at the end of this subsection. The reader might skip the following paragraphs and go directly to subsection~\ref{ss:loc_hom} on a first reading.

To begin with, let us write down an expression for the Hessian of $\length n$ at the critical point $\qq$. For each $\vv,\ww\in\Tan_{\qq}S^{\times n}$ we have
\begin{align*}
\hess\length n(\qq)[\vv,\ww]
=
&
\sum_{j\in\Z_n}
\sfrac{1}{|q_{j+1}-q_j|}
\Bigl(
\langle
v_{j+1}-v_j,
w_{j+1}-w_j
\rangle \\
&
-
\langle
\sfrac{q_{j+1}-q_j}{|q_{j+1}-q_j|},
v_{j+1}-v_j
\rangle
\,
\langle
\sfrac{q_{j+1}-q_j}{|q_{j+1}-q_j|},
w_{j+1}-w_j
\rangle
\Bigr).
\end{align*}
Let us denote by $H=H_{\qq}$ the self-adjoint  endomorphism of $\Tan_{\qq}S^{\times n}$ associated to the Hessian of $\length n$ at $\qq$, i.e.\ $\hess\length n(\qq)[\vv,\ww]=\langle H\vv,\ww \rangle$. If we write \[H\vv=((H\vv)_1,...,(H\vv)_n),\] then for each $j\in\Z_n$ we have
\begin{align}
\label{e:H}
(H\vv)_j
=
-\sfrac{1}{|q_{j+1}-q_j|} \pi_j\circ\tilde\pi_j(v_{j+1}-v_j)
+
\sfrac{1}{|q_j-q_{j-1}|} \pi_j\circ\tilde\pi_{j-1}(v_j-v_{j-1}),
\end{align}
where $\pi_j:\R^{N+1}\to\Tan_{q_j}S$ and $\tilde\pi_j:\R^{N+1}\to \langle q_{j+1}-q_j\rangle^\bot$ are orthogonal projectors. This expression shows that $H$ is a second order difference operator. Now, for each $m\in\N$ and $z\in S^1\subset\C$, we consider the vector space of sequences $\V_{m,z}$ given by
\[
\V_{m,z}=\gbra{ \nnu=\{\nu_j\,|\,j\in\Z\}\,|\,\nu_j\in\Tan_{q_j}S\otimes\C,\ \nu_{j+nm}=z\,\nu_j\ \ \forall j\in\Z },
\]
and, for each $\lambda\in\R$, the eigenvalue problem
\begin{align}\label{e:eigenvalue}
\left\{
  \begin{array}{l}
    H\nnu=\lambda \nnu, \\ 
    \nnu\in\V_{m,z}. \\ 
  \end{array}
\right.
\end{align}
We denote by $\ind_{z,\lambda}(\qq\iter m)$ the number of complex-linearly independent solutions of this eigenvalue problem. Notice that, since $H$ is a real operator with real eigenvalues, the sequence $\nnu\in\V_{m,z}$ is a solution of $H\nnu=\lambda\nnu$ if and only of the complex conjugate sequence $\bar\nnu\in\V_{m,\bar z}$ is a solution of $H\bar\nnu=\lambda\bar\nnu$, and therefore
\[\ind_{z,\lambda}(\qq\iter m)=\ind_{\bar z,\lambda}(\qq\iter m).\]
Now, let us set
\begin{align*}
\ind_z(\qq\iter m)&:=\sum_{\lambda<0} \ind_{z,\lambda}(\qq\iter m),\\
\coind_z(\qq\iter m)&:=\sum_{\lambda>0} \ind_{z,\lambda}(\qq\iter m),\\
\nul_z(\qq\iter m)&:=\ind_{z,0}(\qq\iter m)
\end{align*}
The above sums are finite, for $\ind_{z,\lambda}(\qq\iter m)$ is different from zero only if $\lambda$ belongs to the (finite) spectrum of $H$. These integer indices that we have just defined generalize the Morse index, the Morse coindex and the nullity for, as it readily follows from their definition, we have
\begin{align*}
\ind(\qq\iter m)&=\ind_1(\qq\iter m),\\
\coind(\qq\iter m)&=\coind_1(\qq\iter m),\\
\nul(\qq\iter m)&=\nul_1(\qq\iter m).
\end{align*}

The reason for considering the eigenvalue problem for the operator $H$ in the complexified setting is that there is a nice way to compute the index $\ind_{z,\lambda}(\qq\iter m)$ from the indices $\ind_{w,\lambda}(\qq)$, for every $w\in S^1$, of the non-iterated critical point. The recipe is given by the following statement which is analogous to \cite[Theorem~I]{Bo}. Its proof is a simple application of the Fourier expansion of ``periodic'' sequences, and we include it here for the reader's convenience.

\begin{prop}[Theorem~I in \cite{Bo}]\label{p:Fourier}
For each  $z\in S^1$, $\lambda\in\R$ and $m\in\N$, the index $\ind_{z,\lambda}(\qq\iter m)$ satisfies
\[ 
\ind_{z,\lambda}(\qq\iter m) 
= 
\sum_{w\in \sqrt[m]{z}}
\ind_{w,\lambda}(\qq). 
\]
\begin{proof}
The operator $H$ maps each vector space $\V_{m,z}$ to itself. In fact, consider the shift operator $S:\V_{m,z}\to\V_{m,z}$ given by 
\[
(S\nnu)_j
=
\nu_{j+n},\s\s
\forall \nnu=\gbra{\nu_j\,|\,j\in\Z}\in\V_{m,z}.
\]
By the definition of $H$ (in particular, notice that the index $j$ in equation~\eqref{e:H} is defined modulo $n$) we have that $SH=HS$. Moreover, since $H$ is a real operator, we have that 
\[
zH\nnu=Hz\nnu=HS^m\nnu=S^mH\nnu,\s\s\forall\nnu\in\V_{m,z},
\] 
which proves the claim.

Now, every $\nnu=\gbra{\nu_j\,|\,j\in\Z}\in\V_{m,z}$ admits a unique Fourier expansion
\[ \nnu = \sum_{w\in\sqrt[m]{z}} \nnu_{w}  \]
where, for each $w\in\sqrt[m]{z}$, the sequence $\nnu_w=\{\nu_{w,j}\,|\,j\in\Z\}$ belongs to the vector space $\V_{1,w}$. One can explicitly compute $\nnu_w$ as
\[ \nu_{w,j} = \frac1m \sum_{h=0}^{m-1} w^{1-h} \nu_{j+h},\s\s \forall j\in\Z.  \]
By the first part of the proof we have that $H\nnu\in\V_{m,z}$ and $H\nnu_w\in\V_{1,w}$. Hence, the unique Fourier expansion of $H\nnu\in\V_{m,z}$ is given by
\[ H\nnu=\sum_{w\in\sqrt[m]{z}} H\nnu_{w}. \]
From this, we conclude that $\nnu$ satisfies the eigenvalue problem $H\nnu=\lambda\nnu$ if and only if all the $\nnu_w$'s satisfy the same eigenvalue problem $H\nnu_w=\lambda\nnu_w$.
\end{proof}
\end{prop}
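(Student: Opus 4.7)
The plan is to exploit two commuting operators on each space $\V_{m,z}$: the Hessian endomorphism $H$ and the shift by $n$, and then to use Fourier analysis with respect to the shift in order to decompose the eigenvalue problem on $\V_{m,z}$ into a family of eigenvalue problems on the smaller spaces $\V_{1,w}$, one for each $w\in\sqrt[m]{z}$.

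First, I would inspect the explicit formula~\eqref{e:H} for $H\nnu$. Its coefficients, namely the reciprocals of the distances $|q_{j+1}-q_j|$ together with the orthogonal projectors $\pi_j$ and $\tilde\pi_j$, depend on $j$ only through the $n$-periodic sequence $\qq$ (viewed as a critical point of $\length n$), and so they are themselves $n$-periodic in $j$. Introducing the shift operator $S:\V_{m,z}\to\V_{m,z}$ defined by $(S\nnu)_j=\nu_{j+n}$, this $n$-periodicity translates immediately into the commutation relation $SH=HS$. In particular $H$ preserves $\V_{m,z}$ as well as every $S$-invariant subspace of it.

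Second, I would carry out the spectral analysis of $S$ on $\V_{m,z}$. Since $(S^m\nnu)_j=\nu_{j+nm}=z\,\nu_j$, the operator $S^m$ acts on $\V_{m,z}$ as multiplication by $z$, so $S$ is diagonalizable with spectrum contained in $\sqrt[m]{z}$. The $w$-eigenspace of $S$ in $\V_{m,z}$ is precisely $\V_{1,w}$, because the condition $S\nnu=w\nnu$ is nothing other than $\nu_{j+n}=w\,\nu_j$ for all $j$. Standard Fourier projectors, built from averages of the form $\frac{1}{m}\sum_{h=0}^{m-1} w^{-h} S^h\nnu$, realize the resulting direct sum decomposition
\[
\V_{m,z}=\bigoplus_{w\in\sqrt[m]{z}}\V_{1,w}.
\]
Because $H$ commutes with $S$, this decomposition is $H$-invariant, so every $\nnu\in\V_{m,z}$ with Fourier components $\nnu_w\in\V_{1,w}$ satisfies $H\nnu=\sum_{w}H\nnu_w$ with $H\nnu_w\in\V_{1,w}$. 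By uniqueness of the Fourier decomposition, the eigenvalue equation $H\nnu=\lambda\nnu$ on $\V_{m,z}$ is therefore equivalent to the simultaneous validity of the eigenvalue equations $H\nnu_w=\lambda\nnu_w$ on each $\V_{1,w}$ for $w\in\sqrt[m]{z}$; taking complex dimensions of the $\lambda$-eigenspaces on both sides yields the claimed identity.

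The only nontrivial ingredient is the commutation $SH=HS$, which is a bookkeeping verification from formula~\eqref{e:H}; everything else is routine Fourier analysis on the cyclic group of $m$-th roots of $z$. I therefore do not expect any serious obstacle.
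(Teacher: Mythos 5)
Your proposal is correct and follows essentially the same route as the paper: establish the commutation $SH=HS$ from the $n$-periodicity of the coefficients in~\eqref{e:H}, decompose $\V_{m,z}$ into the $S$-eigenspaces $\V_{1,w}$ via Fourier projectors, and use $H$-invariance of this decomposition to split the eigenvalue problem. Your framing via the spectral decomposition of $S$ is a slightly cleaner way to justify the Fourier expansion that the paper writes down directly, but the content is identical.
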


This proposition tells us that it is enough to study the indices of $\qq\iter m$ with a fixed $m\in\N$, say $m=1$.  To start with, let us investigate the properties of $\nul_z(\qq)$. We call $z\in S^1$ a \textbf{Poincar\'e point} of $\qq$ when $\nul_z(\qq)\neq0$.

\begin{prop}\label{p:Poincare}
There are only finitely many Poincar\'e points $z_1,...,z_r\in S^1$ and we have
\[ \sum_{\alpha=1}^r \nul_{z_\alpha}(\qq) \leq 2N. \]
\begin{proof}
Let $\vv=\gbra{ v_j}$ be a sequence such that, for each $j\in\Z$, the element $ v_j$ belongs to the  tangent space $\Tan_{q_j}S$. By~\eqref{e:H}, $\vv$ satisfies $H\vv=0$ if and only if 
\begin{align}
\label{e:Hnu}
\sfrac{1}{|q_{j+1}-q_j|} \pi_j\circ\tilde\pi_j( v_{j+1}- v_j)
=
\sfrac{1}{|q_j-q_{j-1}|} \pi_j\circ\tilde\pi_{j-1}( v_j- v_{j-1}),\s\s\forall z\in\Z,
\end{align}
where  $\pi_j:\R^{N+1}\to\Tan_{q_j}S$ and $\tilde\pi_j:\R^{N+1}\to \langle q_{j+1}-q_j\rangle^\bot$  are orthogonal projectors as above. For our convenience, let us rewrite equation~\eqref{e:Hnu} as
\begin{align}\label{e:Hnu2}
\pi_j\circ\tilde\pi_j( v_{j+1})
=
\pi_j\circ\tilde\pi_j( v_j)
+
\sfrac{|q_{j+1}-q_j|}{|q_j-q_{j-1}|} \pi_j\circ\tilde\pi_{j-1}( v_j- v_{j-1}).
\end{align}
Now, since the vector $q_{j+1}-q_j\in\R^{N+1}$ is transverse to $\Tan_{q_{j+1}}S$ (as well as to $\Tan_{q_{j}}S$), the composition $\pi_j\circ\tilde\pi_j$ restricts to an isomorphism 
\[ 
\Tan_{q_{j+1}}S  \toup^{\cong} \Tan_{q_{j}}S. 
\]
This shows that we can rewrite equation~\eqref{e:Hnu2} as 
\begin{align} 
\label{e:Hnu_expl1}
 v_{j+1}= A_j  v_j + B_j  v_{j-1},
\end{align}
where $A_j:\Tan_{q_{j}}S \to\Tan_{q_{j+1}}S$ and $B_j:\Tan_{q_{j-1}}S \to\Tan_{q_{j+1}}S$ are  linear maps. Analogously, we can rewrite equation~\eqref{e:Hnu2} as 
\begin{align} 
\label{e:Hnu_expl2}
  v_{j-1}= C_j  v_{j+1} + D_j  v_{j},
\end{align}
where $C_j:\Tan_{q_{j+1}}S  \to\Tan_{q_{j-1}}S$ and $D_j:\Tan_{q_{j}}S \to\Tan_{q_{j-1}}S$ are  linear maps. Equations~\eqref{e:Hnu_expl1} and~\eqref{e:Hnu_expl2} show that every solution $\vv=\{ v_j\}$ of $H\vv=0$ is completely determined by two of its subsequent points, say $( v_0, v_1)$, and conversely any choice of $( v_0, v_1)$ uniquely determine a solution $\vv$. Moreover, $\vv$ depends linearly on $( v_0, v_1)$. Let us denote by $\Phi$ the linear endomorphism of $\Tan_{q_0}S\oplus\Tan_{q_1}S$ given by $\Phi(v_0,v_1)=(v_n,v_{n+1})$, and  let us extend it as a complex linear endomorphism of $(\Tan_{q_0}S\oplus\Tan_{q_1}S)\otimes\C$. 

Now, let us take $z\in S^1$. By its definition, the integer $\nul_{z}(\qq)$ is equal to  the complex dimension of the kernel of $(\Phi-z\mathrm{Id})$. This establishes the proposition.
\end{proof}
\end{prop}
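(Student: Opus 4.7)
The approach is to reduce the spectral question on the infinite-dimensional sequence space $\V_{1,z}$ to an elementary question about eigenvalues of a single endomorphism of a $2N$-dimensional complex vector space. First I would use the explicit formula~\eqref{e:H} for $H$ to rewrite $H\vv=0$ as a two-term linear recurrence in the variables $\pi_j\circ\tilde\pi_j(v_{j+1})$, $\pi_j\circ\tilde\pi_j(v_j)$ and $\pi_j\circ\tilde\pi_{j-1}(v_j-v_{j-1})$. The essential geometric input is then that, because the chord $q_{j+1}-q_j$ is transverse both to $\Tan_{q_j}S$ and to $\Tan_{q_{j+1}}S$ (which follows from strict convexity of $S$ together with $q_j\neq q_{j+1}$), the composition $\pi_j\circ\tilde\pi_j$ restricts to a linear \emph{isomorphism} $\Tan_{q_{j+1}}S\toup^{\cong}\Tan_{q_j}S$.

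Thanks to this invertibility, the recurrence can be solved forwards for $v_{j+1}$ in terms of $(v_{j-1},v_j)$, and symmetrically backwards for $v_{j-1}$ in terms of $(v_j,v_{j+1})$. Consequently every solution of $H\vv=0$ defined on $\Z$ is uniquely determined by the initial pair $(v_0,v_1)\in\Tan_{q_0}S\oplus\Tan_{q_1}S$, and every such pair extends. Iterating the recurrence $n$ steps then produces a linear ``period map''
\[
\Phi:\Tan_{q_0}S\oplus\Tan_{q_1}S\longrightarrow\Tan_{q_0}S\oplus\Tan_{q_1}S,
\qquad
\Phi(v_0,v_1)=(v_n,v_{n+1}),
\]
which I would extend complex-linearly to an endomorphism of a $2N$-dimensional complex vector space.

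By the very definition of $\V_{1,z}$, a solution $\vv$ of $H\vv=0$ belongs to $\V_{1,z}$ if and only if its initial pair $(v_0,v_1)$ is a $z$-eigenvector of $\Phi$; hence $\nul_z(\qq)=\dim_\C\ker(\Phi-z\,\mathrm{Id})$. The Poincar\'e points of $\qq$ are therefore precisely the eigenvalues of $\Phi$ that happen to lie on $S^1$, which are finite in number, and the sum $\sum_{\alpha=1}^r \nul_{z_\alpha}(\qq)$ is the sum of the geometric multiplicities of distinct eigenvalues of $\Phi$, bounded above by the total complex dimension $2N$.

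The step I expect to be the main obstacle is the geometric one: pinning down that $q_{j+1}-q_j$ is genuinely transverse to the two tangent hyperplanes at its endpoints, so that $\pi_j\circ\tilde\pi_j$ is invertible. This is where the strict convexity hypothesis enters essentially; once this is in hand, the remainder of the argument is straightforward finite-dimensional linear algebra.
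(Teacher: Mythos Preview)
Your proposal is correct and follows essentially the same route as the paper: rewrite $H\vv=0$ as a second-order recurrence, use transversality of the chord $q_{j+1}-q_j$ to the tangent hyperplanes to invert $\pi_j\circ\tilde\pi_j$, build the period map $\Phi$ on the $2N$-dimensional space $\Tan_{q_0}S\oplus\Tan_{q_1}S$, and identify $\nul_z(\qq)$ with $\dim_\C\ker(\Phi-z\,\mathrm{Id})$. Your emphasis on strict convexity as the source of the transversality is a welcome clarification that the paper leaves implicit.
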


\begin{rem}\label{r:Poincare}
If $z_1,...,z_r$ are the Poincar\'e points of $\qq$,  their $m^{\mathrm{th}}$ powers $z_1^m,...,z_r^m$ are the Poincar\'e points of $\qq\iter m$, for each $m\in\N$.
\hfill\qed
\end{rem}

The next statement summarizes the properties of $\ind_{z}(\qq)$ and $\coind_z(\qq)$. 
\begin{prop} \label{p:ind_z,coind_z}
The functions $z\mapsto \ind_{z}(\qq)$ and $z\mapsto \coind_{z}(\qq)$ are locally constant on $S^1\setminus\{z_1,...,z_r\}$, where $z_1,...,z_r$ are the Poincar\'e points, and lower semi-continuous on $S^1$. Moreover, the jump of these functions at any  Poincar\'e point $z_\alpha$ is bounded in absolute value by $\nul_{z_\alpha}(\qq)$, i.e.
\begin{gather*}
\ind_{z_\alpha}(\qq) \leq \lim_{z\to z_\alpha^\pm} \ind_{z}(\qq) \leq \ind_{z_\alpha}(\qq) + \nul_{z_\alpha}(\qq),\\
\coind_{z_\alpha}(\qq)\leq\lim_{z\to z_\alpha^\pm} \coind_{z}(\qq)\leq \coind_{z_\alpha}(\qq) + \nul_{z_\alpha}(\qq). 
\end{gather*}
\begin{proof}
For each $z\in S^1$, let us denote by $\sigma_z\subset\R$ the spectrum of the operator $H:\V_{1,z}\to\V_{1,z}$. This spectrum satisfies the following continuity property: for each interval $(a,b)\subset\R\cup\{\pm\infty\}$ such that $a$ and $b$ do not belong to $\sigma_z$, there is a neighborhood of $z$ in $S^1$ such that, for each $z'$ in this neighborhood, $a$ and $b$ do not belong to $\sigma_{z'}$ and moreover
\begin{align*}
\sum_{\lambda\in(a,b)} \ind_{z,\lambda}(\qq)
=
\sum_{\lambda\in(a,b)} \ind_{z',\lambda}(\qq).
\end{align*}
 
Now, assume that $z\in S^1$ is not a Poincar\'e point, namely  $0$ does not belong to $\sigma_z$. Then, by the above continuity property, $0$ does not belong to $\sigma_{z'}$ for each $z'$ in some neighborhood of $z$, and moreover
\[  
\ind_z(\qq)
=
\sum_{\lambda<0} \ind_{z,\lambda}(\qq)
=
\sum_{\lambda<0} \ind_{z',\lambda}(\qq)
=
\ind_{z'}(\qq).
\]
Finally, assume that $z\in S^1$ is a Poincar\'e point, and let us fix a sufficiently small $\epsilon>0$ so that $[-\epsilon,\epsilon]\cap\sigma_z=\{0\}$. Then, by the above continuity property, $-\epsilon$ and $\epsilon$ do not belong to $\sigma_{z'}$ for each $z'$ in some neighborhood of $z$, and we have
\begin{align*}
\ind_{z'}(\qq)
& =
\sum_{\lambda<-\epsilon} \ind_{z',\lambda}(\qq)
+
\sum_{\lambda\in(-\epsilon,0)} \ind_{z',\lambda}(\qq)\\
&=
\sum_{\lambda<-\epsilon} \ind_{z,\lambda}(\qq)
+
\sum_{\lambda\in(-\epsilon,0)} \ind_{z',\lambda}(\qq)\\
&=
\ind_{z}(\qq)
+
\sum_{\lambda\in(-\epsilon,0)} \ind_{z',\lambda}(\qq).
\end{align*}
This proves that $\ind_{z}(\qq)\leq \ind_{z'}(\qq)$. Moreover
\begin{align*}
\ind_{z'}(\qq)
&=
\ind_{z}(\qq)
+
\sum_{\lambda\in(-\epsilon,0)} \ind_{z',\lambda}(\qq)\\
&\leq
\ind_{z}(\qq)
+
\sum_{\lambda\in(-\epsilon,\epsilon)} \ind_{z',\lambda}(\qq)\\
&=
\ind_{z}(\qq)
+
\sum_{\lambda\in(-\epsilon,\epsilon)} \ind_{z,\lambda}(\qq)\\
&=\ind_{z}(\qq)+\nul_{z}(\qq).
\end{align*}
The statement regarding $\coind_{z}(\qq)$ is established in the same way.
\end{proof}
\end{prop}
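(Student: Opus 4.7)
The plan is to realize the family $\{H|_{\V_{1,z}}\}_{z\in S^1}$ as a continuous family of self-adjoint operators on a fixed finite-dimensional Hermitian vector space, so that the spectral counting functions on intervals avoiding the spectrum become locally constant in $z$. To set this up, I would identify $\V_{1,z}$ with the fixed complex vector space $W=\bigoplus_{j=0}^{n-1}\Tan_{q_j}S\otimes\C$ via the evaluation $\nnu\mapsto(\nu_0,\ldots,\nu_{n-1})$, with inverse given by the twisted-periodic extension $\nu_{j+n}=z\nu_j$. Inspection of formula~\eqref{e:H} then shows that, under this identification, the restriction $H_z:=H|_{\V_{1,z}}$ becomes a self-adjoint endomorphism of $W$ whose matrix entries depend analytically on $z\in S^1$: the only places where $z$ enters are the wrap-around terms, where $v_n$ is replaced by $zv_0$ and $v_{-1}$ by $z^{-1}v_{n-1}$. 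Standard perturbation theory for self-adjoint operators on a fixed finite-dimensional Hermitian space (equivalently, continuity of the roots of the characteristic polynomial together with reality of the spectrum) then produces $nN$ continuous eigenvalue functions $\lambda_1(z)\leq\cdots\leq\lambda_{nN}(z)$, and hence the key technical statement: for every open interval $(a,b)\subset\R\cup\{\pm\infty\}$ whose endpoints avoid the spectrum $\sigma_{z_0}$, the total multiplicity $\sum_{\lambda\in(a,b)}\ind_{z,\lambda}(\qq)$ is constant for $z$ in a neighborhood of $z_0$.

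From this continuity property the three assertions of the proposition follow by bookkeeping. If $z_0\in S^1$ is not a Poincar\'e point, so that $0\notin\sigma_{z_0}$, I would apply the continuity property to the intervals $(-\infty,0)$ and $(0,+\infty)$ and conclude that $\ind_z(\qq)$ and $\coind_z(\qq)$ are constant near $z_0$; combined with the finiteness of Poincar\'e points from Proposition~\ref{p:Poincare}, this gives the local constancy claim on $S^1\setminus\{z_1,\ldots,z_r\}$. If instead $z_0=z_\alpha$ is a Poincar\'e point, I would pick $\epsilon>0$ with $[-\epsilon,\epsilon]\cap\sigma_{z_\alpha}=\{0\}$ and apply the continuity property to the three intervals $(-\infty,-\epsilon)$, $(-\epsilon,\epsilon)$ and $(\epsilon,+\infty)$. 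For $z$ sufficiently close to $z_\alpha$ this yields
\begin{align*}
\ind_z(\qq)-\ind_{z_\alpha}(\qq)=\sum_{\lambda\in(-\epsilon,0)}\ind_{z,\lambda}(\qq),
\end{align*}
a nonnegative integer bounded above by $\sum_{\lambda\in(-\epsilon,\epsilon)}\ind_{z,\lambda}(\qq)=\nul_{z_\alpha}(\qq)$; this is simultaneously the lower semi-continuity at $z_\alpha$ and the claimed jump bound. The argument for $\coind_z(\qq)$ is identical after replacing the interval $(-\epsilon,0)$ with $(0,\epsilon)$.

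I do not expect a genuinely hard step here: the main content is the spectral continuity input, which in finite dimensions is elementary, and the only care needed is in setting up the family $\{H_z\}_{z\in S^1}$ on a fixed underlying Hermitian space $W$ so that continuity in $z$ becomes a meaningful notion. Once that identification is in place, the semi-continuity and the jump estimate are immediate bookkeeping at each of the finitely many Poincar\'e points supplied by Proposition~\ref{p:Poincare}.
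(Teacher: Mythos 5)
Your proposal is correct and follows essentially the same route as the paper: both arguments hinge on the continuity of the spectral counting function $\sum_{\lambda\in(a,b)}\ind_{z,\lambda}(\qq)$ in $z$, and then carry out identical bookkeeping near Poincar\'e and non-Poincar\'e points. The only difference is that the paper simply asserts this continuity property, whereas you spell out its justification by identifying each $\V_{1,z}$ with a fixed Hermitian space $W$ on which $H_z$ acts self-adjointly and depends analytically on $z$, which is a welcome clarification but not a different method.
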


\begin{proof}[Proof of Proposition~\ref{p:iteration_inequalities}]
Point~(i) follows from Propositions~\ref{p:Fourier} and~\ref{p:Poincare}. As for point~(ii), let us fix $m\in\N$ and denote by $z_1,...,z_r\in S^1$ the Poincar\'e points of $\qq$, so that $z_1^m,...,z_r^m$ are the Poincar\'e points of $\qq\iter m$. By Proposition~\ref{p:ind_z,coind_z}, for each $w,z\in S^1$ we have
\begin{align}\label{e:iter_ineq_tech1}
\ind_w(\qq\iter m)+\nul_w(\qq\iter m)
\leq
\ind_z(\qq\iter m)+
\sum_{\alpha=1}^r
\nul_{z_\alpha^m}(\qq\iter m)
\end{align}
By Propositions~\ref{p:Fourier} and~\ref{p:Poincare} we have
\[ 
\sum_{\alpha=1}^r
\nul_{z_\alpha^m}(\qq\iter m)
=
\sum_{\alpha=1}^r
\nul_{z_\alpha}(\qq)
\leq 2N,
\]
and, together with~\eqref{e:iter_ineq_tech1}, we obtain
\begin{align}\label{e:iter_ineq_tech2}
\ind_w(\qq\iter m)+\nul_w(\qq\iter m)
\leq
\ind_z(\qq\iter m) +2N.
\end{align}
Now, by Propositions~\ref{p:Fourier} and~\ref{p:ind_z,coind_z}, we have
\begin{align*}
\avind(\qq)
=
\lim_{m\to\infty} \frac{\ind(\qq\iter m)}{m}
=
\lim_{m\to\infty} \frac1m \sum_{w^m=1} \ind_w(\qq)
=
\frac{1}{2\pi } \int_{0}^{2\pi} \ind_{e^{i\theta}}(\qq)\,\diff\theta.
\end{align*}
Notice that
\begin{align*}
\frac{1}{2\pi } \int_{0}^{2\pi} \ind_{e^{i\theta}}(\qq\iter m)\,\diff\theta
=
\avind(\qq\iter m)
=
m\,\avind(\qq),
\end{align*}
and moreover, since $\nul_z(\qq\iter m)=0$ for every $z\in S^1\setminus\{z_1^m,...,z_r^m\}$, we have 
\begin{align*}
\frac{1}{2\pi } \int_{0}^{2\pi} \nul_{e^{i\theta}}(\qq\iter m)\,\diff\theta
=
0.
\end{align*}
Now, by setting $z=1$ and integrating $w$ on $S^1$ in~\eqref{e:iter_ineq_tech2}, we get
\[m\,\avind(\qq)\leq\ind(\qq\iter m)+2N.\]
Then,  by setting $w=1$ and integrating $z$ on $S^1$ in~\eqref{e:iter_ineq_tech2}, we get
\[\ind(\qq\iter m)+\nul(\qq\iter m)\leq m\,\avind(\qq)+2N.\]
This proves point (ii) for the Morse index. The proof for the Morse coindex is analogous.
\end{proof}

\subsection{Local homology of iterated periodic billiard trajectories}\label{ss:loc_hom}

This section is devoted to prove that, if the Morse indices of an isolated critical point of the length functional are preserved by iteration, then the same is true for its local homology. As for the previous subsection, there is a clear parallel with the theory of closed geodesics: in fact, this result has been established for closed geodesics by Gromoll and Meyer \cite{GM_geod}, and further extended to more general Lagrangian systems by Long \cite{Lo_conley}, Lu \cite{Lu_conley} and the author \cite{Maz}. In the following, we prove the result after recalling the notion of local homology.

From now on, all the homology groups are assumed to have coefficients in $\Z_2$. For technical reasons (see the discussion after Proposition~\ref{p:FaTa}) we will work with minus the length functional, that is, with super-levels of the length functional. The \textbf{local homology} of $-\length n$ at an isolated critical point $\qq$ is the homology group $\Loc_*(\qq)$ defined by
\[ 
\Loc_*(\qq):=\Hom_*(\conf n(S)_{> c}\cup\{\qq\},\conf n(S)_{> c}),
\]
where $c=\length n(\qq)$ and $\conf n(S)_{> c}=\{ \qq'\in\conf n(S)\,|\,\length n (\qq')>c \}$. Recall that the dihedral group $\D_n$ acts by coordinates-permutations on the cyclic configuration space $\conf n(S)$ and $\length n$ is invariant under its action, see Section~\ref{s:preliminaries}. The local homology of $-\length n$ at the $\D_n$-orbit of  $\qq$ is  defined by
\[ 
\Loc_*(\D_n\cdot\{\qq\}):=\Hom_*(\conf n(S)_{> c}\cup\D_n\cdot\{\qq\},\conf n(S)_{> c}).
\]
By excision, it is straightforward to verify that the local homology   of $\D_n\cdot\{\qq\}$ is the direct sum of the local homology of each element in the $\D_n$-orbit of $\qq$, and in particular the inclusion induces a homology monomorphism
\[ \Loc_*(\qq)\hookrightarrow\Loc_*(\D_n\cdot\{\qq\}). \]
We also recall that the local homology $\Loc_k(\qq)$, and therefore $\Loc_k(\D_n\cdot\{\qq\})$ as well, is possibly nontrivial only if $\coind(\qq)\leq k\leq\coind(\qq) + \nul(\qq)$, and that $\qq$ is a local maximum of $\length n$ if and only if
\begin{align*}
\Loc_k(\qq) 
=
\left\{
  \begin{array}{lcl}
    \Z_2 & &\mbox{if } k=0, \\ 
    0 & &\mbox{if }  k\neq0. 
  \end{array}
\right.
\end{align*}
For a proof of these results as well as for more details on the local homology groups, see \cite{GM} or \cite[Chapter~I]{Ch}. The reader should keep in mind that the Morse coindex of $\length n$ is the Morse index of $-\length n$ and vice versa. 

The main result of this subsection is the following.
\begin{prop}\label{p:iter_iso}
Let $\qq\in\conf n(S)$ be an isolated critical point of $\length n$ with critical value $\length n(\qq)=c$, and assume that, for some $m\in\N$, we have
\[ \coind(\qq)=\coind(\qq\iter m),\s\s\nul(\qq)=\nul(\qq\iter m). \]
Then,   the iteration map $\itmap m$ induces the homology isomorphism
\[
\itmap m_*:\Loc_*(\qq)\toup^{\cong}\Loc_*(\qq\iter m).
\]
\end{prop}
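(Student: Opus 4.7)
The plan is to apply the Gromoll--Meyer generalized Morse lemma (the splitting theorem) at both $\qq$ and $\qq\iter m$ and reduce the claim to a comparison of the characteristic functions on the null eigenspaces of the Hessians. The crucial identity $\length{nm}\circ\itmap m = m\,\length n$ will then force the characteristic function at $\qq\iter m$ to equal $m$ times the one at $\qq$, which is enough since $m>0$ does not alter the topology of super-/sub-level sets.

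The first step is to unpack the hypotheses via the Fourier decomposition of Proposition~\ref{p:Fourier}. From
\begin{align*}
\nul(\qq\iter m) &= \sum_{w^m=1}\nul_w(\qq), \\
\coind(\qq\iter m) &= \sum_{w^m=1}\coind_w(\qq),
\end{align*}
and $\nul_1(\qq)=\nul(\qq)$, $\coind_1(\qq)=\coind(\qq)$, one immediately obtains $\nul_w(\qq)=\coind_w(\qq)=0$ for every $w\in\sqrt[m]{1}\setminus\{1\}$. Let $E\subset \Tan_{\qq\iter m}S^{\times nm}$ denote the image of $\itmap m_*$, consisting of sequences constant on each block of length $n$. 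A direct inspection of~\eqref{e:H} shows that $H_{\qq\iter m}$ preserves $E$, that its restriction to $E$ is conjugate to $H_\qq$ via $\itmap m_*$, and that the $H_{\qq\iter m}$-orthogonal complement $V$ of $E$ is the sum of the Fourier modes indexed by the non-trivial $m$-th roots of unity. By the above vanishing, $H_{\qq\iter m}$ is negative definite on $V$, so $\itmap m_*$ carries the null (respectively positive) eigenspace of $H_\qq$ isomorphically onto the null (respectively positive) eigenspace of $H_{\qq\iter m}$.

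The second step is to apply the splitting theorem to $-\length n$ at $\qq$, producing local coordinates $(\xi_+,\xi_-,\xi_0)$ with $\xi_\pm$ in the positive/negative eigenspaces of $H_\qq$ and $\xi_0 \in N_\qq:=\ker H_\qq$, in which
\[ -\length n(\qq+\xi) = -c - \|\xi_+\|^2 + \|\xi_-\|^2 + f_\qq(\xi_0), \]
for some smooth $f_\qq$ on $N_\qq$ with an isolated critical point at the origin. Transporting these coordinates along $\itmap m$ to $E$ and applying the Morse lemma in the transverse $V$-directions (where $H_{\qq\iter m}$ is nondegenerate with definite sign) yields local coordinates at $\qq\iter m$ in which the identity $\length{nm}\circ\itmap m = m\,\length n$ forces the characteristic function to equal $m\,f_\qq$ under the identification $N_{\qq\iter m}=\itmap m_*(N_\qq)$. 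Since multiplication by $m>0$ preserves super-/sub-level sets, the Gromoll--Meyer shifting theorem together with $\coind(\qq)=\coind(\qq\iter m)$ gives the desired isomorphism, realized by $\itmap m_*$ by construction.

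The main obstacle I expect is the compatibility of the Morse--Gromoll--Meyer charts at the two critical points. The splitting theorem does not produce canonical coordinates, so one must be careful to build the chart at $\qq\iter m$ starting from the chosen chart at $\qq$, transported via $\itmap m$ in the $E$-directions, and independently via the standard Morse lemma in the $V$-directions. Without such a prescription one would only obtain \emph{some} isomorphism on local homology, rather than the one induced by $\itmap m_*$, which is the statement we need for the applications in Section~\ref{s:proof}.
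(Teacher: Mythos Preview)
Your approach is essentially correct, but it differs from the paper's. The paper does not unpack Gromoll--Meyer by hand; instead it invokes an abstract comparison principle (Proposition~\ref{p:emb_iso}): if a functional $F$ on $U\subset\R^k$ satisfies $\nabla F(x)\in\VV$ for every $x\in U\cap\VV$, and the Morse index and nullity of $F$ and of $F|_{U\cap\VV}$ coincide at the isolated critical point, then the inclusion $U\cap\VV\hookrightarrow U$ induces an isomorphism on local homology. The entire proof then reduces to a short computation in flat local charts showing that $(\nabla\ell\iter{nm})\circ\Psi\iter m=m\,\nabla\ell\iter n$, i.e.\ the gradient of $\length{nm}$ along the diagonal $\itmap m(S^{\times n})$ is tangent to that diagonal. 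No Fourier decomposition is needed, and no explicit splitting of the Hessian is performed.

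Your route via the Fourier decomposition of $H_{\qq\iter m}$ gives more structural insight (it explains \emph{why} the transverse directions contribute only to the index of $-\length{nm}$), at the cost of doing Gromoll--Meyer by hand. One point to tighten: the identity $\length{nm}\circ\itmap m=m\,\length n$ alone does not ``force'' the characteristic function at $\qq\iter m$ to be $m\,f_{\qq}$. That identity controls $\length{nm}$ only on the diagonal; to know that the parametric Morse section $V=V_c(\xi)$ over the null space stays on the diagonal (i.e.\ $V_c\equiv0$), you need exactly the gradient tangency $(\nabla\length{nm})|_{\mathrm{im}\,\itmap m}\in T(\mathrm{im}\,\itmap m)$. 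This is the condition the paper checks directly, and it also follows immediately from the $\Z_m$-shift invariance of $\length{nm}$ (the diagonal is the fixed-point set, so the gradient of an invariant function is tangent to it). Once you add this observation, your compatibility-of-charts argument goes through and yields the isomorphism induced by $\itmap m_*$, as you intended.
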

We will prove this proposition by means of the following abstract principle of Morse theory. Here we only quote the statement for a finite dimensional setting, as needed for our purposes.

\begin{prop}[Theorem~4.1 in \cite{Maz}]\label{p:emb_iso}
Let $U\subseteq\R^k$ be an open neighborhood of the origin,  $F:U\to\R$ a smooth functional having the origin as isolated critical point, and $\VV\subset\R^k$ a vector subspace. Assume that $\nabla F(x)\in\VV$ for each $x\in U\cap\VV$, and that the Morse index and the nullity of $F$ at the origin are equal to the Morse index and the nullity of the restricted functional $F|_{U\cap\VV}$ at the origin. Then the inclusion $U\cap\VV\subset U$ induces an isomorphism between the local homology of $F|_{U\cap\VV}$ at the origin and the local homology of $F$ at the origin.
\hfill\qed
\end{prop}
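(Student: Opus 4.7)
The strategy is to exhibit a local chart near $\qq\iter m$ in $\conf{nm}(S)$ in which the image of the iteration map $\itmap m$ appears as a linear subspace, verify that $F:=-\length{nm}$ fits the hypotheses of Proposition~\ref{p:emb_iso} with respect to this subspace, and then read off the claimed isomorphism as the inclusion-induced map. I work with $F=-\length{nm}$ in accordance with the conventions of subsection~\ref{ss:loc_hom}; as implicit in the statement, I assume that $\qq\iter m$ is isolated as a critical point of $\length{nm}$, so that $\Loc_*(\qq\iter m)$ is well defined.

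For the chart, write $\qq=(q_0,\dots,q_{n-1})$, so that $\qq\iter m$ has $k$-th bounce point $q_{k\bmod n}$. For each $j\in\Z_n$ choose an open neighborhood $W_j\subset S$ of $q_j$ together with a chart $\varphi_j:W_j\to\Tan_{q_j}S$ centered at $q_j$, for example the inverse of the exponential map of the induced Riemannian metric on $S$. Taking the product of $\varphi_{k\bmod n}$ over the $nm$ bounce points of $\qq\iter m$ produces a chart
\[
\Phi:\prod_{k=0}^{nm-1}W_{k\bmod n}\;\longrightarrow\;\bigoplus_{k=0}^{nm-1}\Tan_{q_{k\bmod n}}S\simeq\R^{nmN}
\]
on an open neighborhood of $\qq\iter m$. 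In these coordinates the image of $\itmap m$ becomes the linear ``diagonal'' subspace
\[
\VV=\bigl\{(v_0,\dots,v_{nm-1})\bigm|v_k=v_{k'}\text{ whenever }k\equiv k'\!\!\pmod n\bigr\},
\]
canonically identified with $\Tan_{\qq}\conf n(S)$. Since $\length{nm}\circ\itmap m=m\,\length n$, the restriction $F|_\VV$ agrees, under this identification, with the coordinate pull-back of $-m\,\length n$ near $\qq$.

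Next I verify the two hypotheses of Proposition~\ref{p:emb_iso}. The cyclic shift $\tau:\conf{nm}(S)\to\conf{nm}(S)$ sending $(q'_0,\dots,q'_{nm-1})$ to $(q'_n,\dots,q'_{nm-1},q'_0,\dots,q'_{n-1})$ generates a $\Z_m$-action whose fixed-point set is precisely $\itmap m(\conf n(S))$; in our chart $\tau$ is realized by an orthogonal permutation of the $m$ coordinate blocks of $\R^{nmN}$ that fixes $\VV$ pointwise. Because $F$ is $\tau$-invariant and the Euclidean inner product on $\R^{nmN}$ is $\tau$-invariant as well, the Euclidean gradient $\nabla F$ is $\tau$-equivariant, and the identity $\nabla F(x)=\tau\,\nabla F(x)$ for $x\in\VV$ forces $\nabla F(x)\in\VV$, establishing gradient tangency. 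For the index/nullity condition, note that $F|_\VV$ differs from $-\length n$ near $\qq$ only by the positive scalar factor $m$, hence its Morse index at the origin equals $\coind(\qq)$ and its nullity equals $\nul(\qq)$; by assumption these coincide with $\coind(\qq\iter m)$ and $\nul(\qq\iter m)$, which are the Morse index and nullity of $F$ at the origin.

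Proposition~\ref{p:emb_iso} then delivers a local-homology isomorphism induced by the inclusion $U\cap\VV\hookrightarrow U$, which under the identifications above is precisely $\itmap m_*:\Loc_*(\qq)\to\Loc_*(\qq\iter m)$. The main subtlety is arranging coordinates in which the image of the iteration map becomes literally linear \emph{and} the cyclic shift $\tau$ acts by an orthogonal linear transformation; using the same chart $\varphi_j$ on every one of the $m$ copies of $W_j$ accomplishes both simultaneously, and this compatibility is exactly what makes the equivariance-based gradient argument go through.
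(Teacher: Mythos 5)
Your argument does not prove Proposition~\ref{p:emb_iso}. That proposition is an \emph{abstract} local-homology invariance result about a smooth functional $F$ on an open set $U\subseteq\R^k$ and a gradient-invariant linear subspace $\VV\subset\R^k$; in the paper it is the key technical lemma, quoted from~\cite{Maz} and stated without proof. What you have written is instead a proof of Proposition~\ref{p:iter_iso}, the \emph{application} of Proposition~\ref{p:emb_iso} to the billiard setting: you build a chart near $\qq\iter m$ in which the image of $\itmap m$ becomes a linear subspace, check gradient tangency and the index/nullity hypotheses, and then explicitly \emph{invoke} Proposition~\ref{p:emb_iso} to conclude. As a proof of Proposition~\ref{p:emb_iso} itself this is circular and no argument at all. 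A genuine proof has to stay in the abstract finite-dimensional Morse-theoretic setting and compare the local homology of $F$ with that of $F|_{U\cap\VV}$ directly --- typically via a splitting lemma adapted to the $\VV$-preserving gradient flow and the shifting theorem, using the coincidence of indices and nullities to place the degenerate directions inside $\VV$ and to make the transverse Hessian positive definite. None of that machinery appears in your proposal.

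For what it is worth, read as a proof of Proposition~\ref{p:iter_iso} your argument is essentially correct and runs parallel to the paper's, with one genuine difference: to verify the gradient-tangency hypothesis $\nabla F(x)\in\VV$, the paper computes the Euclidean gradient of the coordinate expression of $\length{nm}$ explicitly (after rescaling the product metric by $m^{-1}$) and checks directly that at diagonal points it lies on the diagonal, whereas you deduce the same from the equivariance of $\nabla F$ under the $\Z_m$-action of the cyclic block shift $\tau$, whose fixed-point set is exactly the diagonal. The equivariance route is slicker and avoids the coordinate computation, and it hinges, as you correctly emphasize, on using the \emph{same} chart $\varphi_j$ on every one of the $m$ copies so that $\tau$ acts as an orthogonal block permutation. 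But none of this addresses the statement you were actually asked to prove.
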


\begin{proof}[Proof of Proposition~\ref{p:iter_iso}]
All we need to do is to reduce our setting in such a way that we satisfy the assumptions of Proposition~\ref{p:emb_iso}. For each $j\in\Z_n$ let us consider a chart $\phi_j:V_j\to\R^N$ for $S$, where $V_j$ is an open neighborhood of $q_j$. Up to shrink the $V_j$'s, we can assume that $V_j\cap V_{j+1}=\varnothing$. We further define smooth functions $\ell_j:V_j\times V_{j+1}\to\R$ by
\[ \ell_j(x_j,x_{j+1})=\left|\phi_j^{-1}(x_j)-\phi_{j+1}^{-1}(x_{j+1})\right|,\s\s\forall (x_j,x_{j+1})\in V_j\times V_{j+1}. \]
Then, the product $V:=V_0\times...\times V_{n-1}$ is an open set of $\conf n(S)$, and the map \[\pphi:=(\phi_0,...,\phi_{n-1}):V\to\R^{nN}\] is a chart for $\conf n(S)$ centered at $\qq$. In this local coordinates the length functional $\length n$ can be written as
\[ 
\ell\iter n(\bm x):=\length n\circ\pphi^{-1}(\bm x)=\sum_{j\in\Z_n} \ell_j(x_j,x_{j+1}),\s\s\forall\bm x=(x_0,...,x_{n-1})\in \pphi(V). 
\]
A straightforward computation shows that the gradient of $\ell\iter n$ at $\bm x$, with respect to the flat metric of $\R^{nN}$, is given by $\bm g=(g_0,...,g_{n-1})$, where
\begin{align}\label{e:gradient1}
g_j= \partial_1\ell_j(x_j,x_{j+1}) + \partial_2\ell_{j-1}(x_{j-1},x_j),\s\s\forall j\in\Z_n.
\end{align}
Now, consider the map
\[
\pphi\iter m=(\underbrace{\bigl.\pphi,...,\pphi}_{\times m}):V^{\times m}\to(\R^{nN})\iter m.
\]
This map is a chart for $\conf{nm}(S)$ centered at $\qq\iter m$, and in this local coordinates we denote the length functional $\length {nm}$ by 
$\ell\iter{nm}:=\length {nm}\circ(\pphi\iter m)^{-1}$.  Now, if we put on $(\R^{nN})\iter m$ the flat metric rescaled by the factor $m^{-1}$, the gradient of  $\ell\iter{nm}$ at $\bm y$ is given by $\tilde{\bm g}=(\tilde g_0,...,\tilde g_{nm-1})$, where
\begin{align}\label{e:gradient2}
\tilde g_j= \partial_1\ell_{j\,\textrm{mod}\,n}(y_j,y_{j+1}) + \partial_2\ell_{j-1\,\textrm{mod}\,n}(y_{j-1},y_j),\s\s\forall j\in\Z_{nm}.
\end{align}
Let us denote by $\Psi\iter m:=\pphi\iter m\circ\itmap m\circ\pphi^{-1}$ the iteration map in local coordinates, which turns out to be the restriction of the (linear) diagonal embedding of $\R^{nN}$ into $(\R^{nN})^{\times m}$. Notice that $\ell\iter{nm}\circ\Psi\iter m=m\,\ell\iter n$. From this, together with~\eqref{e:gradient1} and~\eqref{e:gradient2}, we infer
\[ (\nabla \ell\iter{nm})\circ\Psi\iter m=m\,\nabla\ell\iter n.
 \]
Therefore, the claim of the proposition follows by applying Proposition~\ref{p:emb_iso} with $U=\pphi\iter m(V^{\times m})$,  $F=-m^{-1}\ell\iter{nm}$ and the inclusion $U\cap\VV\subset U$ given by the iteration map $\Psi\iter m$.
\end{proof}

\subsection{Homological vanishing by iteration}\label{ss:hom_vanish}

In this section we show how to recover, in the ``discrete'' setting of billiards, the homological vanishing under iteration. This is a remarkable phenomenon, discovered by Bangert in \cite[Section~3]{Ban} in the study of closed geodesics and further investigated by several authors in \cite{BK, Lo_conley, Lu_conley, Maz}.

Let $\qq$ be a critical point of the length functional $\length n$ with critical value $c=\length n(\qq)$. For each $\epsilon>0$ sufficiently small, the subset $\cconf n\epsilon(S)\subset\conf n(S)$ defined in~\eqref{e:cconf} contains $\qq$, and therefore for each $m\in\N$ the subset $\cconf {nm}\epsilon(S)\subset\conf {nm}(S)$ contains $\qq\iter m$. Now notice that, by excision, the local homology of $\length n$ at $\qq$ can be equivalently defined as 
\[ \Loc_*(\qq)=\Hom_*(\cconf n\epsilon(S)_{>c}\cup\gbra\qq,\cconf n\epsilon(S)_{>c}). \]
Then, let us fix an arbitrary $b<c$ and consider the iteration map restricted as a map of pairs of the form 
\[ 
\itmap m:(\cconf n\epsilon(S)_{>c}\cup\gbra\qq,\cconf n\epsilon(S)_{>c})
\hookrightarrow
(\cconf {nm}\epsilon(S)_{>mb},\cconf {nm}\epsilon(S)_{>mc}).
\]
In homology, this map induces the homomorphism
\begin{align}
\label{e:iter_vanish}
\itmap m_*:
\Loc_*(\qq)
\to
\Hom_*(\cconf {nm}\epsilon(S)_{>mb},\cconf {nm}\epsilon(S)_{>mc}).
\end{align}

\begin{prop}[Homological vanishing by iteration]\label{p:hom_vanish}
Assume that $\qq$ is not a local maximum for $\length n$. Then, for each integer $p\geq2$, there exists $\bar m=\bar m(\qq,p)\in\N$ that is a nonnegative  power of $p$ such that the homomorphism $\itmap {\bar m}_*$  as in~\eqref{e:iter_vanish}  is the zero one.
\end{prop}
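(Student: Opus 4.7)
The plan is to analyze the iteration map $\itmap{p^k}_*$ as $k\to\infty$ through the factorization
\[
\itmap m_*\colon\Loc_*(\qq)\longrightarrow\Loc_*(\qq\iter m)\longrightarrow\Hom_*\bigl(\cconf{nm}\epsilon(S)_{>mb},\cconf{nm}\epsilon(S)_{>mc}\bigr),
\]
in which the first arrow is iteration on local homology and the second is induced by the inclusion of pairs. Vanishing of either arrow suffices, and I would split the analysis according to the mean Morse coindex $\avcoind(\qq)$ of Proposition~\ref{p:iteration_inequalities}. If $\avcoind(\qq)>0$, then by Proposition~\ref{p:iteration_inequalities}(ii) we have $\coind(\qq\iter{p^k})\geq p^k\,\avcoind(\qq)-2N\to\infty$, so for $k$ sufficiently large $\coind(\qq\iter{p^k})>\coind(\qq)+\nul(\qq)$. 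Since local homology is supported in degrees $[\coind,\coind+\nul]$, source and target of the first arrow occupy disjoint degree ranges; the first arrow vanishes, and hence so does $\itmap{p^k}_*$.

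If instead $\avcoind(\qq)=0$, the iteration inequalities force $\coind(\qq\iter{p^k})+\nul(\qq\iter{p^k})\leq 2N$ uniformly in $k$, and the preceding degree argument collapses. One must instead show that the inclusion-arrow vanishes for some $\bar m=p^k$. Here the hypothesis that $\qq$ is not a local maximum of $\length n$ enters. Since $\qq$ is also an isolated critical point, the super-level set $\{\length n>c\}$ accumulates at $\qq$, so one can produce a continuous path $\eta\colon[0,1]\to\conf n(S)$ with $\eta(0)=\qq$ and $\length n(\eta(t))>c$ for $t\in(0,1]$. For each $j\in\Z_{\bar m}$ this path induces an insertion
\[
\Psi_{j,t}=(\underbrace{\qq,\ldots,\qq}_{j\text{ slots}},\eta(t),\underbrace{\qq,\ldots,\qq}_{\bar m-j-1\text{ slots}})\in\conf{n\bar m}(S),
\]
starting from $\qq\iter{\bar m}$ and landing in $\{\length{n\bar m}>\bar m c\}$ for $t>0$. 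Propagating a representing chain of a local-homology class at $\qq\iter{\bar m}$ along the family $\{\Psi_{j,t}\}_{j,t}$ and patching the $\bar m$ pieces consistently with the $\Z_{\bar m}$-cyclic symmetry then assembles a chain in $\cconf{n\bar m}\epsilon(S)_{>\bar m b}$ whose boundary equals the iterated cycle modulo $\cconf{n\bar m}\epsilon(S)_{>\bar m c}$. This is the discrete counterpart of the Bangert--Klingenberg nullhomotopy construction~\cite{Ban,BK} for closed geodesics; the auxiliary level $b<c$ supplies the intermediate band into which the chain may dip below $\bar m c$, and requiring $\bar m$ to be a sufficiently high power of $p$ provides enough slots for the patching.

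The principal obstacle lies in this second case: the $\bar m$ insertion contributions must be combined so that their boundaries cancel modulo the top super-level $\{\length{n\bar m}>\bar m c\}$, leaving exactly the chosen representative cycle, while the entire chain stays above $\bar m b$. This is the delicate step where the $\Z_{\bar m}$-cyclic combinatorics on $\conf{n\bar m}(S)$ and the use of $\Z_2$-coefficients---mirroring the heart of Bangert's original argument---are essential, and it is what dictates that $\bar m$ be taken as a large power of the prescribed prime $p$.
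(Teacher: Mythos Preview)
Your dichotomy on $\avcoind(\qq)$ is superfluous: the paper's argument is uniform and never touches the iteration inequalities. More to the point, your Case~1 has a gap. The factorization through $\Loc_*(\qq\iter m)$ requires $\qq\iter m$ to be an \emph{isolated} critical point of $\length{nm}$, and the proposition does not assume this (only $\qq$ is assumed isolated, implicitly via the very definition of $\Loc_*(\qq)$). Without isolation of the iterate, the intermediate group need not carry the degree support $[\coind,\coind+\nul]$ on which your vanishing argument rests. In the application (Section~\ref{s:proof}) assumption~\textbf{(F1)} supplies isolation at all periods, but the proposition is stated and proved without it.

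For Case~2 you have identified the correct mechanism but written down only the $0$-dimensional base case: your family $\Psi_{j,t}$ homotopes the single \emph{point} $\qq\iter{\bar m}$ into $\{\length{n\bar m}>\bar m c\}$, whereas one must push entire singular simplices of arbitrary dimension, compatibly across their faces. The paper isolates this in a separate homotopical lemma (Lemma~\ref{l:hom_vanish}): given any $\sigma:(\Delta^j,\partial\Delta^j)\to(U_{>b},U_{>c})$, one slices $\Delta^j$ by segments parallel to the axis through its barycenter and applies a ``sliding insertion'' $\gamma\bang m$ along each segment; a direct length estimate (inequality~\eqref{e:estimate_bangert}) shows that $\sigma\iter m$ is homotoped rel $\partial\Delta^j$ into $U\iter m_{>mc}$ once $m$ is large. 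The proposition is then proved by induction on the dimension of the faces of a representing relative cycle (local homology is finitely generated, so finitely many classes suffice), invoking the lemma at each step and extending the already-built face homotopies to the interior via a retraction of $[0,\sfrac12]\times\Delta^j$ onto $([0,\sfrac12]\times\partial\Delta^j)\cup(\{0\}\times\Delta^j)$. The restriction $\bar m\in\{p^k\}$ is met simply by rounding each $\bar m(\sigma)$ from the lemma up to the next power of $p$; contrary to what you suggest, neither $\Z_2$-coefficients nor any delicate $\Z_{\bar m}$-cyclic cancellation plays a role in this particular step.
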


The proof is based on a homotopical result that we are going to discuss here. Consider a point $\qq=(q_0,...,q_{n-1})$ in the space $\cconf n\epsilon(S)$ and, for each $j\in\Z_n$, a sufficiently small neighborhood $U_j\subset S$ of $q_j$, so that   $U:=U_0\times...\times U_{n-1}$ is an open neighborhood  of $\qq$ in $\cconf n\epsilon(S)$. For each $c\in\R$, we denote by $U_{>c}$ the set of  points $\qq'$ in $U$ such that $\length n(\qq')>c$. Analogously, for each $m\in\N$, we denote by $U\iter m_{>c}$ the set of points $\qq''$ in $U\iter m$ such that $\length {nm}(\qq'')>c$. Notice that the iteration map $\itmap m$ send $U_{>c}$ into $U\iter m_{>mc}$. Then, for each $j\in\N$, we denote by $\Delta^j$ the standard $j$-simplex in $\R^{j}$.

\begin{lem}[Homotopical vanishing by iteration]\label{l:hom_vanish}
Let $b<c$ such that $U_{>c}\neq\varnothing$, and consider the singular simplex $\sigma:(\Delta^j,\partial\Delta^j)\to(U_{>b},U_{>c})$, i.e.\ $[\sigma]\in\pi_j(U_{>b},U_{>c})$. Then, there exists $\bar m=\bar m(\sigma)\in\N$ and, for each integer $m\geq\bar m$, a homotopy 
\begin{align}\label{e:bangert_form}
\sigma\biter m: [0,1]\times(\Delta^j,\partial\Delta^j)\to(U\iter m_{>mb},U\iter m_{>mc})
\end{align}
such that
\begin{itemize}
\item[(i)] $\sigma\biter m(0,\cdot)=\sigma\iter m:=\itmap m\circ\sigma$,
\item[(ii)] $\sigma\biter m(t,x)=\sigma\biter m(0,x)$ for each $x\in\partial\Delta^j$,
\item[(iii)] $\sigma\biter m(1,\Delta^j)\subset U\iter m_{>mc}$.
\end{itemize}
In particular $[\sigma\iter m]=0$ in $\pi_j(U\iter m_{>mb},U\iter m_{>mc})$.
\begin{proof}
Let us begin by explaining the basic construction that will be employed in the proof. Consider a map 
$\gamma:[x_0,x_1]\to U$, where $[x_0,x_1]\subset\R$.  For each $m\in\N$ we define a map 
\[\gamma\bang m:[x_0,x_1]\to U\iter m\] 
in the following way: for each $k\in\{0,...,m-1\}$ and $y\in[x_0,x_1]$, denoting $\qq_0=\gamma(x_0)$, $\qq_1=\gamma(x_1)$ and $x=x_0 + (x_1-x_0)\sfrac{k}{m}+\sfrac{y-x_0}{m}$, we set 
\[
\gamma\bang m(x)
:=
(
\underbrace{\bigl.\qq_1,...,\qq_1}_{\times k},
\gamma(y),\underbrace{\qq_0,...,\qq_0\bigr.}_{\times m-k-1}
).
\]
The length of $\gamma\bang m(x)$ is bounded from below as
\begin{equation}\label{e:estimate_bangert}
\begin{split}
\length{nm}(\bm\gamma\bang m(x))
&
\geq
(k-1) \length n(\qq_1)
+
(m-k-2) \length n(\qq_0)\\
&
\geq
(m-3) \min\!\gbra{ \length{n}(\bm\gamma(x_0)),\length{n}(\bm\gamma(x_1)) }
.
\end{split}
\end{equation}

Now, consider the singular simplex $\sigma$ of the statement. We want to decompose its domain $\Delta^j$ as a ``continuous'' family of paths, and then apply the above construction to each path separately. Let $\Line\subseteq\R^j$ be the axis passing through the origin and the barycenter of   $\Delta^j\subset\R^j$. For each $s\in[0,1]$ we denote by $s\Delta^j$ the rescaled $j$-simplex given by $\{s z\,|\, z\in\Delta^j\}$.   For each $z\in s\Delta^j$, we denote by $[x_0(s,z),x_1(s,z)]\subset s\Delta^j$ the maximum segment that contains $z$ and is parallel to $\Line$. This notation is summarized, for $j=2$, in Figure~\ref{f:simplex}.

\frag[s]{a}{$\underbrace{\hspace{72pt} }_{\mbox{$s$}}$}%
\frag[s]{L}{$\Line$}%
\frag[ss]{x0}{$x_0(s,z)$}%
\frag[ss]{x1}{$x_1(s,z)$}%
\frag[ss]{z}{$z$}%
\begin{figure}
\begin{center}
\includegraphics{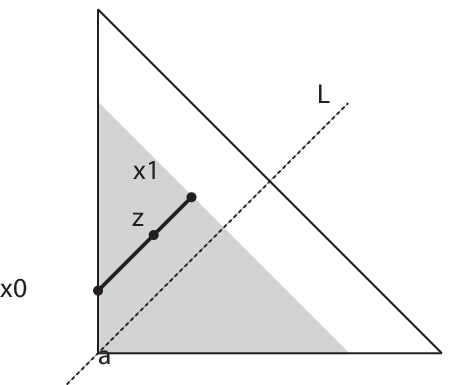}
\end{center}
\captionstyle{myCenter}
\caption{}
\label{f:simplex}
\end{figure}

We define the homotopy $\sigma\biter m: [0,1]\times\Delta^j\to U$ by
\begin{align*}
\sigma\biter m(s,z)
=
\left\{
  \begin{array}{lll}
    \Bigl.(\sigma|_{[x_0(s,z),x_1(s,z)]})\bang m(z) &  & \mbox{if }z\in s\Delta^j, \\ 
    \Bigl.\sigma\iter m(z) &  & \mbox{if }z\not\in s\Delta^j. 
  \end{array}
\right.
\end{align*}
This homotopy clearly satisfies properties (i-ii) in the statement. Then, for each $s\in[0,1]$ and $z\in s\Delta^j$, by the estimate in~\eqref{e:estimate_bangert} we have
\begin{align*}
\length{nm}(\sigma\biter m(s,z))
\geq\, &
(m-3) \min\!\gbra{ \length{n}(\sigma(x_0(s,z))),\length{n}(\sigma(x_1(s,z))) },
\end{align*}
while for each $z\in \Delta^j\setminus s\Delta^j$ we have
\begin{align*}
\length{nm}(\sigma\biter m(s,z))
=
\length{nm}(\sigma\iter m(z))
=
m\,\length n(\sigma(z)).
\end{align*}
Let us choose $\delta>0$ sufficiently small so that $\sigma(\Delta^j)\subset U_{>b+\delta}$ and $\sigma(\partial\Delta^j)\subset U_{>c+\delta}$. For each $(s,z)\in[0,1]\times\Delta^j$ we obtain
\begin{align*}
\length{nm}(\sigma\biter m(s,z))
\geq\, &
(m-3) (b+\delta),\\
\length{nm}(\sigma\biter m(1,z))
\geq\, &
(m-3) (c+\delta).
\end{align*}
This implies that, for $m\in\N$ sufficiently big, the homotopy $\sigma\biter m$ has the form~\eqref{e:bangert_form} and satisfies (iii).
\end{proof}
\end{lem}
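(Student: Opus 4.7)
The plan is to implement a discrete analogue of Bangert's homotopical vanishing argument from \cite{Ban}. The guiding principle is that an $m$-fold iterate $\qq\iter m$ of length $m\,\length n(\qq)$ can, by ``smearing'' the $m$ copies across two different bounce configurations, be homotoped to a configuration of length close to $m\,\min\{\length n(\qq_0),\length n(\qq_1)\}$ for any pair of reference points $\qq_0,\qq_1\in U$ joined to $\qq$ by a path. Since $\sigma(\partial\Delta^j)$ already lies in $U_{>c}$, applying this smearing fiberwise along segments of $\Delta^j$ whose endpoints sit on $\partial\Delta^j$ will raise the length at every interior point up to essentially $m c$, which is precisely conclusion~(iii).

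First I would formalise the one-dimensional ``Bangert move.'' Given a path $\gamma:[x_0,x_1]\to U$ with endpoints $\qq_0,\qq_1$, I split $[x_0,x_1]$ into $m$ consecutive subintervals and, on the $k$-th one, declare the $k$-th entry of the output in $U\iter m$ to be a rescaled copy of $\gamma$, while filling the first $k$ slots with copies of $\qq_1$ and the last $m-k-1$ with copies of $\qq_0$. Direct inspection shows that this $\gamma\bang m$ agrees with $\itmap m\circ\gamma$ at both endpoints and across all internal seams, so it is continuous. At least $m-3$ of the $m$ closed $n$-gons inside $\gamma\bang m(x)$ are full copies of either $\gamma_{\qq_0}$ or $\gamma_{\qq_1}$, which yields the crucial length lower bound
\[
\length{nm}(\gamma\bang m(x))\ \ge\ (m-3)\min\{\length n(\qq_0),\length n(\qq_1)\}.
\]

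To lift this to the simplex, I would fix the line $\Line\subset\R^j$ through the origin and the barycenter of $\Delta^j$, consider the nested family $s\Delta^j\subset\Delta^j$ for $s\in[0,1]$, and for each $z\in s\Delta^j$ let $[x_0(s,z),x_1(s,z)]$ be the maximal $\Line$-parallel segment in $s\Delta^j$ through $z$. Then I would set $\sigma\biter m(s,z)$ to be the Bangert replacement of $\sigma$ along this segment, evaluated at $z$, for $z\in s\Delta^j$, and $\sigma\biter m(s,z)=\sigma\iter m(z)$ otherwise. Continuity across the interface $\partial(s\Delta^j)$ is automatic, since there $z$ is already an endpoint of its segment and $\gamma\bang m$ at an endpoint returns $\itmap m\circ\gamma$. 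Property~(i) follows because $0\cdot\Delta^j=\{0\}$ collapses the Bangert region to a single point. Property~(ii) rests on the observation that every $z\in\partial\Delta^j$ is also an endpoint of its $\Line$-parallel segment in $\Delta^j$: a boundary point lies on a face $\{t_i=0\}$ or on the face $\{\sum t_k=1\}$, and in either case the line $z+\R\cdot(1,\ldots,1)$ immediately exits $\Delta^j$ on one side of $z$, forcing $\sigma\biter m(s,z)=\sigma\iter m(z)$ for every $s$.

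For property~(iii) and the overall range $\sigma\biter m([0,1]\times\Delta^j)\subset U\iter m_{>mb}$, I would use compactness of $\Delta^j$ to choose $\delta>0$ with $\sigma(\Delta^j)\subset U_{>b+\delta}$ and $\sigma(\partial\Delta^j)\subset U_{>c+\delta}$. Combining these strict inequalities with the one-dimensional length estimate yields
\[
\length{nm}(\sigma\biter m(s,z))\ge(m-3)(b+\delta),\qquad
\length{nm}(\sigma\biter m(1,z))\ge(m-3)(c+\delta),
\]
each of which strictly exceeds the required threshold $mb$, resp.\ $mc$, as soon as $m$ is larger than a constant depending only on $c$ and $\delta$. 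The main technical subtlety, I expect, is the verification of property~(ii): the careful choice of $\Line$ as the axis through the origin and the barycenter is exactly what guarantees that every $z\in\partial\Delta^j$ becomes an endpoint of its segment, which in turn makes the homotopy degenerate to $\sigma\iter m$ on $\partial\Delta^j$ and delivers the rel-boundary property demanded by the lemma.
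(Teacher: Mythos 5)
Your proposal reproduces the paper's argument essentially verbatim: the same one-dimensional Bangert move $\gamma\bang m$ with the same lower bound $(m-3)\min\{\length n(\qq_0),\length n(\qq_1)\}$, the same decomposition of $\Delta^j$ into $\Line$-parallel segments inside the nested family $s\Delta^j$, the same piecewise definition of $\sigma\biter m$, and the same choice of $\delta$ and final estimates. The only substantive observation you add beyond what the paper writes explicitly is the verification that every $z\in\partial\Delta^j$ is an endpoint of its $\Line$-parallel segment (since the $\pm(1,\dots,1)$ direction immediately leaves the simplex across a coordinate face or across $\{\sum t_i=1\}$), which is indeed the reason the homotopy is fixed on $\partial\Delta^j$ and continuity holds across $\partial(s\Delta^j)$; this is correct and consistent with the paper.
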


\begin{proof}[Proof of Proposition~\ref{p:hom_vanish}]
The proof is based on the homotopical vanishing principle of Lemma~\ref{l:hom_vanish}, together with the homotopical invariance property of singular homology as stated in~\cite[Lemma~1]{BK}. 

Let us fix the integer $p\geq2$ of the statement and set \[\K_{p}=\{p^{n}\,|\,n\in\N\cup\{0\}\}.\] Since the local homology group $\Loc_{*}(\qq)$ is finitely generated (see~\cite[Lemma~2]{GM}), all we need to prove is that, for each homology class $[\mu]\in\Loc_{*}(\qq)$ and for some $\bar m=\bar m([\mu])\in\K_{p}$, we have $\itmap {\bar m}_{*}[\mu]=0$ in $\Hom_*(\cconf {n \bar m}\epsilon(S)_{>\bar{m}b},\cconf {n \bar m}\epsilon(S)_{>\bar{m}c})$.

Let $U$ be the open neighborhood of $\qq$ introduced in the paragraph preceding  Lemma~\ref{l:hom_vanish}. By excision, we can assume that the relative cycle $\mu$ representing $[\mu]$ has support contained in $U$. We  denote by $\Sigma(\mu)$ the collection of singular simplices in $\mu$ and all their lower dimensional faces. For each $\sigma:\Delta^j\to U$ contained in $\Sigma(\mu)$ we will define $\bar m=\bar m(\sigma)\in\K_p$ and a homotopy 
\[\sigma\siter{\bar m}:[0,1]\times \Delta^j \to U\iter{\bar m}_{>\bar mb}\]
such that
\begin{itemize}
\item[(i)] $\sigma\siter{\bar m}(0,\cdot)=\sigma\iter {\bar m}$,
\item[(ii)] $\sigma\siter{\bar m}(1,\Delta^j)\subset  U\iter{\bar m}_{>\bar mc}$,
\item[(iii)] if $\sigma(\Delta^j)\subset U_{>c}$, then  $\sigma\siter{\bar m}(s,\cdot)=\sigma\iter{\bar m}$ for each $s\in[0,1]$,
\item[(iv)] $(\sigma\circ F_i)\siter{\bar m}=\sigma\siter{\bar m}(\cdot,F_i(\cdot))$ for each $i\in\gbra{0,...,j}$, where $F_i:\Delta^{j-1}\to\Delta^j$ is the standard affine map onto the $i^{\mathrm{th}}$ face of $\Delta^j$.
\end{itemize}
For each $m\in\K_p$ greater than $\bar{m}$, we define a homotopy $\sigma\siter{m}:[0,1]\times \Delta^j \to U\iter{m}_{>mb}$ by \[\sigma\siter{m}:=\itmap {m/\bar m}\circ\sigma\siter{\bar m}.\] This homotopy satisfies the analogous properties (i-iv) in period $m$. Notice that property (iv) implicitly requires that $\bar m(\sigma\circ F_i)\leq\bar m(\sigma)$ for each $i\in\gbra{0,...,j}$. 

Now, take a sufficiently big $m\in\K_p$ so that $\sigma\siter m$ is defined for all $\sigma\in\Sigma(\mu)$. Then, by means of the above homotopies, the relative cycle $\mu\iter m=\itmap m\circ \mu$ is homologous to a relative cycle whose support is contained in $U\iter m_{>mc}\subset\cconf m\epsilon(S)_{>mc}$. In fact, this latter relative cycle is obtain from $\mu\iter m$ by homotoping each singular simplex $\sigma\iter m\in\Sigma(\mu\iter m)$ to $\sigma\siter m(1,\cdot)$ via the homotopy $\sigma\siter m$. This implies that $\itmap m_*[\mu]=[\mu\iter m]=0$ in $\Hom_*(\cconf {n  m}\epsilon(S)_{>mb},\cconf {n  m}\epsilon(S)_{>mc})$. In order to finish the proof, we only need to build the above homotopies satisfying (i-iv). The idea is to apply Lemma~\ref{l:hom_vanish} subsequently to all the faces of the singular simplices in $\Sigma(\mu)$, starting from the 0-dimensional faces and then going up to the higher dimensional ones. 

Let us  proceed by induction, starting by assuming that $\mu$ is a $0$-relative cycle. Then, $\Sigma(\mu)$ is simply a finite set of points in $U_{>b}$. Let $\ww$ be one of these points. If $\ww\in U_{>c}$ we are already done: we simply  set $\bar m=\bar m(\ww)=1$ and $\ww\siter{\bar m}(s)=\ww$ for each $s\in[0,1]$. In the other case, $\ww\in U_{>b}\setminus U_{>c}$, we take an arbitrary continuous path $\gamma:[0,1]\to U$ such that $\gamma(0)=\ww$ and $\gamma(1)\in U_{>c}$. Notice that such a path exists, since the critical point $\qq$ of the statement is not a local minimum and therefore, since $\length n(\qq)=c$, the subset $U_{>c}$ is not empty. Then, consider $\bar m=\bar m(\gamma)\in\N$ and the associated homotopy 
\[\gamma\biter{\bar m}:[0,1]\times[0,1]\to U\iter{\bar m}\] 
given by Lemma~\ref{l:hom_vanish}, such that $\gamma\biter{\bar m}(s,0)=\ww\iter{\bar m}$, $\gamma\biter{\bar m}(s,1)=\gamma(1)\iter{\bar m}$ and $\gamma\biter{\bar m}(1,s)\in U\iter{\bar m}_{>\bar m b}$ for each $s\in[0,1]$. Without loss of generality we can assume that $\bar m\in\K_p$, and we set 
\[\ww\siter {\bar m}=\gamma\biter{\bar m}(1,\cdot).\]

When $\mu$ is a $j$-relative cycle, with $j\geq 1$, we can apply the inductive hypothesis: for every  nonnegative integer $j'\leq j-1$ and for each $j'$-singular simplex $\sigma\in\Sigma(\mu)$ we obtain  $\bar m=\bar m(\sigma)\in\K_p$ and  a homotopy $\sigma\siter {\bar m}$ satisfying the above properties (i-iv). Now, consider a $j$-singular simplex $\sigma\in\Sigma(\mu)$. If $\sigma(\Delta^j)\subset U_{>c}$ we simply  set $\bar m=\bar m(\sigma):=1$ and $\sigma\siter{\bar m}(s,\cdot):=\sigma$ for each $s\in[0,1]$. Otherwise, if $\sigma(\Delta^j)\not\subset  U_{>c}$,  we denote by $\bar m'=\bar m'(\sigma)$ the maximum of the $\bar m(\nu)$'s for all the proper faces $\nu$ of $\sigma$. For each   $m\in\K_p$ greater than or equal to $\bar m'$,  every such $\nu$  has an associated  homotopy   
$\nu\siter m$ satisfying the above condition (i-iv).
 For technical reasons, let us assume that $\nu\siter {m}(s,\cdot)=\nu\siter m(\sfrac12,\cdot)$ for each  $s\in[\sfrac12,1]$. Patching together the homotopies of the proper faces of $\sigma$, we obtain
\[  \sigma\siter m: ([0,\sfrac12]\times \partial\Delta^j) \cup (\gbra0\times \Delta^j) \to U\iter m_{>mb},\]
such that $\sigma\siter m(0,\cdot)=\sigma\iter m$ and $\sigma\siter m(\cdot,F_i(\cdot))=(\sigma\circ F_i)\siter m$ for each $i=0,...,j$. By retracting $[0,\sfrac12]\times \Delta^j$ onto $([0,\sfrac12]\times \partial\Delta^j) \cup (\gbra0\times \Delta^j)$ we can extend  the homotopy $\sigma\siter m$, obtaining
\begin{align}\label{e:P_homot_1}  
\sigma\siter m: [0,\sfrac12]\times \Delta^j \to U\iter m_{>mb}.
\end{align}
Notice that $\sigma\siter {\bar m'}(\sfrac12,\cdot)$ is a singular simplex of the form
\[
\sigma\siter {\bar m'}(\sfrac12,\cdot): (\Delta^j,\partial\Delta^j) \to (U\iter{\bar m'}_{>\bar m'b},U\iter{\bar m'}_{>\bar m'c} ).
\]
Let us briefly denote this singular simplex by $\tilde\sigma$, and consider $\bar m''=\bar m(\tilde\sigma)$ and the homotopy $\tilde\sigma\biter{\bar m''}$ given by Lemma~\ref{l:hom_vanish}, so that 
\begin{align*}
& \tilde\sigma\biter{\bar m''}(0,\cdot)=\tilde\sigma\iter {\bar m''}=\sigma\siter {\bar m''\bar m'}(\sfrac12,\cdot),\\
& \tilde\sigma\biter{\bar m''}(1,\Delta^j)\subset U\iter{\bar m'\bar m''}_{>\bar m'\bar m''c}.
\end{align*} 
Then, we set $\bar m=\bar m(\sigma):=\bar m'\bar m''$ and we extend the homotopy in~\eqref{e:P_homot_1} to $[0,1]\times\Delta^j$ by 
\[
\sigma\siter{\bar m}(s,\cdot):= \tilde\sigma\biter {\bar m''}(2s-1,\cdot),\s\s\forall s\in[\sfrac12,1]. \qedhere
\]
\end{proof}

\section{Proof of the main result}\label{s:proof}

In this section we carry out the proof of Theorem~\ref{t:main}, stated in the introduction. Let us adopt the notation of Section~\ref{s:preliminaries}, so that our billiard table is the strictly convex compact subset $U_S$ enclosed by a smooth hypersurface $S\subset\R^{N+1}$, with $N\geq2$. The proof of Theorem~\ref{t:main} will be by contradiction: let us fix, once for all, a prime number $p\in\N$ and let us assume that the following two conditions hold:
\begin{itemize}
\item[\textbf{(F1)}] 
for each $n\in\N$, there are only finitely many periodic billiard trajectories bouncing $p^n$ times;
\item[\textbf{(F2)}] there are only finitely many periodic billiard trajectories $\gamma_1,...,\gamma_r$ satisfying properties (i-ii) in the statement.
\end{itemize}
Let us define $\K_p:=\{p^{n}\,|\,n\in\N\cup\{0\}\}$. For each $\alpha\in\{1,...,r\}$, let us denote by  $\tilde\qq_\alpha=(\tilde q_{\alpha,1},...,\tilde q_{\alpha,n_\alpha})\in\conf {n_\alpha}(S)$  the sequence of bounce points of $\gamma_\alpha$. Let  $n\in\K_p$  be the maximum of the $n_\alpha$'s, and let us set 
\[\qq_\alpha:=\tilde\qq_\alpha\iter{n/n_\alpha}\in\conf n(S),\s\s\forall\alpha\in\{1,...,r\}.\]

\begin{rem}\label{r:only_q_alpha}
For each critical point $\qq$ of the length functional $\length n$, the function $m\mapsto\coind(\qq\iter m)$, where $m\in\K_p$, is monotone increasing (although not strictly). In fact, the differential of the iteration map $\itmap m$ at $\qq$ maps (injectively) any positive eigenspace of $\hess\length n(\qq)$ to a positive eigenspace of $\hess\length{nm}(\qq\iter m)$. This implies that, by Assumption~\textbf{(F2)}, for all $m\in\K_p$ the set of critical points of $\length {nm}$ with Morse coindex less than or equal to $N$ is given by the $\D_{nm}$-orbits of those $\qq_\alpha\iter m$'s such that $\coind(\qq_\alpha\iter m)\leq N$.
\hfill\qed
\end{rem}

As a first step in our proof, let us establish the following claim. We refer the reader to Section~\ref{ss:loc_hom} for the definition and properties of  local homology groups. From now on, all the homology groups are assumed to have coefficients in $\Z_2$.

\begin{claim}\label{claim:1}
There exists $\qq\in\{\qq_1,...,\qq_r\}$ with $\avcoind(\qq)=0$ and an infinite subset $\K_p'\subset\K_p$ such that, for each $m\in\K_p'$, the local homology group $\Loc_{N-1}(\qq\iter m)$ is nontrivial.
\begin{proof}
For each $m\in\K_p$, let us fix a sufficiently small $\epsilon=\epsilon(m)>0$  and consider the space $\cconf {nm}\epsilon(S)$ introduced in Section~\ref{s:preliminaries}. By Proposition~\ref{p:FaTa}(iii),  this space contains all the critical points   of $\length {nm}$. Proposition~\ref{p:FaTa}(ii) implies that the inclusion $\cconf {nm}\epsilon(S)\hookrightarrow\conf {nm}(S)$ induces an isomorphism in homology, and therefore by~\eqref{e:Poincare_polynomial}  we infer that 
\begin{align}\label{e:homology_epsilon}
\Hom_{N-1}(\cconf {nm}\epsilon(S))\neq0.
\end{align} 
By Proposition~\ref{p:FaTa}(i,iv), the functional $-\length{nm}:\cconf {nm}\epsilon(S)\to\R$ satisfies the ``general boundary conditions'' for Morse theory (see \cite[Section~6.1]{Ch}), and we have the Morse inequality
\begin{align}\label{e:Morse_inequality}
\dim \Hom_k(\cconf {nm}\epsilon(S))
\leq
\sum_{\qq'} 
\dim \Loc_k(\qq'),
\end{align}
where the sum in the right hand side runs over all the critical points $\qq'$ of $\length{nm}$. Notice that only those $\qq'$ such that 
\begin{align}
\label{e:claim1_qq'}
\coind(\qq')\leq k\leq\coind(\qq')+\nul(\qq')
\end{align} 
may possibly give a nonzero contribution. Now, choosing $k=N-1$, by  Remark~\ref{r:only_q_alpha} the elements in the $\D_{nm}$-orbit of the $\qq_\alpha\iter m$'s are the only critical points $\qq'$ of $\length {nm}$ that may satisfy~\eqref{e:claim1_qq'}. Hence, by~\eqref{e:homology_epsilon} and by the Morse inequality~\eqref{e:Morse_inequality} we infer
\begin{align}\label{e:claim1_q_alpha}
0\neq 
\sum_{\alpha=1}^r 
\dim \Loc_{N-1}(\D_{nm}\cdot\{\qq_\alpha\iter m\}).
\end{align}
Now, if all the $\qq_\alpha$'s had nonzero mean Morse coindex $\avcoind(\qq_\alpha)$, by the iteration inequality in Proposition~\ref{p:iteration_inequalities}(ii) we would have $\coind(\qq_\alpha\iter m)>N-1$, providing $m\in\K_p$ is big enough. However, this would imply that   $\Loc_{N-1}(\D_{nm}\cdot\{\qq_\alpha\iter m\})=0$  for each $\alpha\in\{1,...,r\}$, contradicting~\eqref{e:claim1_q_alpha}. Hence, some of the $\qq_\alpha$'s, say $\qq_1,...,\qq_s$ where $s\leq r$, satisfy $\avcoind(\qq_\alpha)=0$. Up to choosing $m$ big enough the inequality in~\eqref{e:claim1_q_alpha} reduces to
\begin{align*}%\label{e:claim1_q_alpha}
0\neq 
\sum_{\alpha=1}^s 
\dim \Loc_{N-1}(\D_{nm}\cdot\{\qq_\alpha\iter m\}).
\end{align*}
This  implies that at least one $\qq$ among $\qq_1,...,\qq_s$ satisfies $\Loc_{N-1}(\D_{nm}\cdot\{\qq\iter m\})\neq0$, and thus $\Loc_{N-1}(\qq\iter m)\neq0$ as well, for infinitely many $m\in\K_p$.
\end{proof}
\end{claim}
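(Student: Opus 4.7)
The plan is to combine Morse theory on the compact space $\cconf{nm}\epsilon(S)$ with the iteration inequality of Proposition~\ref{p:iteration_inequalities}, using hypothesis \textbf{(F2)} to force one of the finitely many $\qq_\alpha$'s to carry nonzero local homology in degree $N-1$ for infinitely many $m\in\K_p$.

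First, for each $m\in\K_p$ I fix $\epsilon=\epsilon(m)>0$ small enough so that Proposition~\ref{p:FaTa} applies; then $-\length{nm}$ on the compact manifold $\cconf{nm}\epsilon(S)$ satisfies the general boundary conditions for Morse theory and retains all critical points of $\length{nm}$. Since the inclusion $\cconf{nm}\epsilon(S)\hookrightarrow\conf{nm}(S)$ is a homotopy equivalence, the Poincaré polynomial~\eqref{e:Poincare_polynomial} tells us that $H_{N-1}(\cconf{nm}\epsilon(S))\neq 0$: once $nm\geq 3$ the $j=1$ term of the sum $\sum_j t^{(N-1)j}$ contributes a monomial $t^{N-1}$, and since $N-1\neq N$ no cancellation occurs with the $t^N+1$ factor. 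The Morse inequality in degree $N-1$ therefore produces some critical point $\qq'$ of $\length{nm}$ with $\Loc_{N-1}(\qq')\neq 0$, which forces $\coind(\qq')\leq N-1\leq N$.

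At this point Remark~\ref{r:only_q_alpha} (a direct consequence of \textbf{(F2)} together with the monotonicity of the coindex under iteration within $\K_p$) identifies $\qq'$: it must lie in the $\D_{nm}$-orbit of some $\qq_\alpha\iter m$. Summing over orbits and using that the local homology of a $\D_{nm}$-orbit splits as a direct sum of the local homologies of its elements, I conclude that for every sufficiently large $m\in\K_p$ there exists at least one index $\alpha(m)\in\{1,\dots,r\}$ with $\Loc_{N-1}(\qq_{\alpha(m)}\iter m)\neq 0$.

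Finally, I want to rule out those $\qq_\alpha$ with $\avcoind(\qq_\alpha)>0$. By Proposition~\ref{p:iteration_inequalities}(ii), such a $\qq_\alpha$ satisfies $\coind(\qq_\alpha\iter m)\geq m\,\avcoind(\qq_\alpha)-2N$, which exceeds $N-1$ once $m$ is large enough (uniformly in $\alpha$, since there are only finitely many $\qq_\alpha$), killing the corresponding $\Loc_{N-1}$. Hence for all sufficiently large $m\in\K_p$ the index $\alpha(m)$ must belong to the subset $\mathcal{A}\subseteq\{1,\dots,r\}$ of indices with $\avcoind(\qq_\alpha)=0$; in particular $\mathcal{A}\neq\varnothing$, and the pigeonhole principle applied to the finite set $\mathcal{A}$ selects a single $\alpha\in\mathcal{A}$ realized for an infinite subset $\K_p'\subset\K_p$. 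Setting $\qq:=\qq_\alpha$ yields the claim. The only subtle point is making sure the constants in the iteration inequality are uniform in $\alpha$, which is immediate from the finiteness of $\{\qq_1,\dots,\qq_r\}$.
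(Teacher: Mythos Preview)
Your argument is correct and follows essentially the same route as the paper's own proof: nonvanishing of $H_{N-1}(\cconf{nm}\epsilon(S))$ via the Poincar\'e polynomial, the Morse inequality to produce a critical point with nontrivial $\Loc_{N-1}$, identification of that point as some $\qq_\alpha\iter m$ through Remark~\ref{r:only_q_alpha}, elimination of the $\qq_\alpha$'s with positive mean coindex via Proposition~\ref{p:iteration_inequalities}(ii), and finally the pigeonhole principle. Your explicit justification of $H_{N-1}\neq 0$ (requiring $nm\geq 3$) and your remark on the uniformity in $\alpha$ are minor clarifications the paper leaves implicit.
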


Now, by Proposition~\ref{p:iteration_inequalities}, the indices $\coind(\qq\iter m)$ and $\nul(\qq\iter m)$ are uniformly bounded for all $m\in\K_p'$, and hence we can choose an infinite subset $\K_p''\subset\K_p'$ such that  $\coind(\qq\iter m)$ and $\nul(\qq\iter m)$ are constant in $m\in\K_p''$. Without loss of generality, let us assume that $1$ belongs to $\K_p''$ (equivalently, set $m:=\min\K_p''$ and rename $\qq$ to be $\qq\iter m$ and $\K_p''$ to be $m^{-1}\K_p''$).

We set $c:=\length n(\qq)$ and we fix an arbitrary real number $b<c$ such that none of the $\qq_\alpha$'s has critical value in the open interval $(b,c)$, i.e. $\length n(\qq_\alpha)\not\in(b,c)$ for all $\alpha\in\gbra{1,...,r}$. Now, for each $m\in\K_p''$, let us choose $\epsilon=\epsilon(m)>0$ small enough so that $\cconf{n}\epsilon(S)$ and  $\cconf{nm}\epsilon(S)$ satisfy the assertions (i-iv) of Proposition~\ref{p:FaTa}. Then, let us  consider the iteration map restricted as a map of pairs of the form 
\begin{align}\label{e:iter_restr}
\itmap m:(\cconf n\epsilon(S)_{>c}\cup\gbra\qq,\cconf n\epsilon(S)_{>c})
\hookrightarrow
(\cconf {nm}\epsilon(S)_{>mb},\cconf {nm}\epsilon(S)_{>mc}).
\end{align}

\begin{claim}\label{claim:2}
For each $m\in\K_p''$, the iteration map in~\eqref{e:iter_restr} is injective in degree $(N-1)$ homology, i.e.   
\[ 
\itmap m_*:\Loc_{N-1}(\qq)\hookrightarrow\Hom_{N-1}(\cconf {nm}\epsilon(S)_{>mb},\cconf {nm}\epsilon(S)_{>mc}),\s\s\forall m\in\K_p'' 
\]
\begin{proof}
Let us consider an arbitrary $m\in\K_p''$. By Assumption~\textbf{(F1)}, we can choose $b'\in(mb,mc)$ sufficiently close to $mc$ such that the only critical value of $\length{nm}$ in the interval $(b',mc]$ is $mc$. By Morse theory (see e.g.~\cite[Theorem~4.2]{Ch}), the inclusion
\begin{align*}
(\cconf {nm}\epsilon(S)_{>c}\cup\{\qq\iter m\},\cconf {nm}\epsilon(S)_{>c})
\hookrightarrow
(\cconf {nm}\epsilon(S)_{>b'},\cconf {nm}\epsilon(S)_{>mc})
\end{align*}
is injective in homology, namely it induces the monomorphism
\begin{align}\label{e:incl_iso_loc}
\Loc_*(\qq\iter m)
\hookrightarrow
\Hom_*(\cconf {nm}\epsilon(S)_{>b'},\cconf {nm}\epsilon(S)_{>mc}).
\end{align}
Now, let us examine the long exact sequence of the triple \[(\cconf {nm}\epsilon(S)_{>mb},\cconf {nm}\epsilon(S)_{>b'},\cconf {nm}\epsilon(S)_{>mc}),\] which we can write as the following exact triangle:
\begin{align*}
\xymatrix{
\Hom_*(\cconf {nm}\epsilon(S)_{>b'},\cconf {nm}\epsilon(S)_{>mc})
\ar[ddrr]
&&
\\\\
\Hom_*(\cconf {nm}\epsilon(S)_{>mb},\cconf {nm}\epsilon(S)_{>b'})
\ar[uu]^{\partial_*}
&&
\Hom_*(\cconf {nm}\epsilon(S)_{>mb},\cconf {nm}\epsilon(S)_{>mc})
\ar[ll]
}
\end{align*}
Here, $\partial_*$ is the boundary homomorphism which  lower the grade $*$ by one, and the other homomorphisms are simply induced by inclusions. Now, by our choice of $b$ and $b'$, none of the $\qq_\alpha\iter m$'s has critical value inside the interval $(mb,b']$ and, by assumption~\textbf{(F2)}, all the critical values of $\length {nm}$ inside $(mb,b']$ correspond to critical points that either are local maxima or have Morse coindex greater than $N$. This, together with the Morse inequalities, implies that the group $\Hom_j(\cconf {nm}\epsilon(S)_{>mb},\cconf {nm}\epsilon(S)_{>b'})$ is trivial in degree $j=N-1$ and $j=N$. Hence, the above exact triangle implies that the diagonal homomorphism is an isomorphism in degree $N-1$, i.e.\ the inclusion induces the homology isomorphism
\[
\Hom_{N-1}(\cconf {nm}\epsilon(S)_{>b'},\cconf {nm}\epsilon(S)_{>mc})
\toup^{\cong}
\Hom_{N-1}(\cconf {nm}\epsilon(S)_{>mb},\cconf {nm}\epsilon(S)_{>mc}).
\]
This isomorphism and the monomorphism in~\eqref{e:incl_iso_loc} fit in the following commutative diagram, where all the homomorphisms are induced by  inclusions:
\begin{align*}
\xymatrix{
&
\Hom_{N-1}(\cconf {nm}\epsilon(S)_{>b'},\cconf {nm}\epsilon(S)_{>mc})
\ar[dd]^{\cong}
\\
\Loc_{N-1}(\qq\iter m)
\ar@{^{(}->}[ur]
\ar[dr]^{\iota}
&
\\
&
\Hom_{N-1}(\cconf {nm}\epsilon(S)_{>mb},\cconf {nm}\epsilon(S)_{>mc})
}
\end{align*}
This forces the inclusion-induced homomorphism $\iota$ to be injective, i.e.
\[
\iota:\Loc_{N-1}(\qq\iter m)
\hookrightarrow
\Hom_{N-1}(\cconf {nm}\epsilon(S)_{>b'},\cconf {nm}\epsilon(S)_{>mc}).
\]
Now, by Proposition~\ref{p:iter_iso},  the iteration map induces the homology isomorphism
\[ 
\itmap {m}_*: 
\Loc_*(\qq)
\toup^{\cong}
\Loc_*(\qq\iter {m}).
\]
By composing the monomorphism $\iota$ with the isomorphism  $\itmap m_*$ we obtain the claim.
\end{proof}
\end{claim}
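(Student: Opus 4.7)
The plan is to factor $\itmap m_*$ through two pieces: first apply the iteration isomorphism of Proposition~\ref{p:iter_iso} to move from $\qq$ to $\qq\iter m$, and then show the resulting inclusion of local homology into the superlevel-set pair is injective via an exact-triangle computation. Since $m\in\K_p''$ was arranged so that $\coind(\qq\iter m)=\coind(\qq)$ and $\nul(\qq\iter m)=\nul(\qq)$, Proposition~\ref{p:iter_iso} yields the isomorphism $\itmap m_*:\Loc_*(\qq)\stackrel{\cong}{\to}\Loc_*(\qq\iter m)$. It thus suffices to show that the inclusion-induced map $\iota:\Loc_{N-1}(\qq\iter m)\to \Hom_{N-1}(\cconf{nm}\epsilon(S)_{>mb},\cconf{nm}\epsilon(S)_{>mc})$ is injective.

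To isolate $\qq\iter m$, invoke Assumption~\textbf{(F1)}: the length functional $\length{nm}$ has only finitely many critical values, so there exists $b'\in(mb,mc)$ such that $mc$ is the only critical value of $\length{nm}$ in $(b',mc]$. Standard Morse theory (see e.g.~\cite[Theorem~4.2]{Ch}) then provides the injection $\Loc_*(\qq\iter m)\hookrightarrow \Hom_*(\cconf{nm}\epsilon(S)_{>b'},\cconf{nm}\epsilon(S)_{>mc})$, as the relative homology at the isolated critical level $mc$ decomposes as the direct sum of local homologies. It remains to show that the inclusion-induced map
\[
\Hom_{N-1}(\cconf{nm}\epsilon(S)_{>b'},\cconf{nm}\epsilon(S)_{>mc})\to \Hom_{N-1}(\cconf{nm}\epsilon(S)_{>mb},\cconf{nm}\epsilon(S)_{>mc})
\]
is injective; I will in fact show it is an isomorphism by analyzing the long exact sequence of the triple $(\cconf{nm}\epsilon(S)_{>mb},\cconf{nm}\epsilon(S)_{>b'},\cconf{nm}\epsilon(S)_{>mc})$.

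Inspection of the resulting exact triangle reduces the problem to showing that $\Hom_j(\cconf{nm}\epsilon(S)_{>mb},\cconf{nm}\epsilon(S)_{>b'})=0$ for $j\in\{N-1,N\}$. Morse inequalities bound the dimension of this group by $\sum_{\qq'}\dim\Loc_j(\qq')$ summed over critical points $\qq'$ of $\length{nm}$ whose critical values lie in $(mb,b']$. By the choice of $b$, no iterate $\qq_\alpha\iter m$ (whose critical value is $m\,\length n(\qq_\alpha)\notin(mb,mc)$) appears in this sum. Every remaining contributing critical point, by Remark~\ref{r:only_q_alpha} combined with Assumption~\textbf{(F2)}, is either a local maximum of $\length{nm}$---whose local homology is concentrated in degree $0$ and hence invisible in degrees $N-1,N$ since $N\geq 2$---or has Morse coindex strictly greater than $N$, in which case $\Loc_j=0$ for $j\leq N$. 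The desired vanishing follows, completing the argument.

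The main obstacle is the bookkeeping in the last step: one must verify that no ``hidden'' critical point of $\length{nm}$ at a level in $(mb,b']$ can contribute to homology in degrees $N-1$ or $N$. This relies crucially on the monotonicity of the coindex under iteration (Remark~\ref{r:only_q_alpha}), on the hypothesis $N\geq 2$ that prevents local maxima from polluting degree $N-1$, and on the freedom granted by Assumption~\textbf{(F1)} to move $b'$ arbitrarily close to $mc$.
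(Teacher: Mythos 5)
Your proposal is correct and follows essentially the same route as the paper: the iteration isomorphism of Proposition~\ref{p:iter_iso} applied to $\Loc_*(\qq)\to\Loc_*(\qq\iter m)$, the Morse-theoretic injection into $\Hom_*(\cconf{nm}\epsilon(S)_{>b'},\cconf{nm}\epsilon(S)_{>mc})$ after choosing $b'$ via \textbf{(F1)}, and the long exact sequence of the triple combined with the vanishing of $\Hom_j(\cconf{nm}\epsilon(S)_{>mb},\cconf{nm}\epsilon(S)_{>b'})$ in degrees $N-1$ and $N$, which follows from \textbf{(F2)}, Remark~\ref{r:only_q_alpha}, the choice of $b$, and $N\geq 2$. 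The only difference is cosmetic: you invoke Proposition~\ref{p:iter_iso} at the start to reduce to the injectivity of the inclusion-induced map $\iota$, whereas the paper composes with it at the end.
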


For each $m\in\K_p''$, the critical point $\qq\iter m$ of $\length{nm}$ is not a local maximum. In fact, $\Loc_{N-1}(\qq\iter m)$ is nontrivial whereas the local homology groups of a local maximum are nontrivial only in degree zero (we recall that $N\geq2$). Therefore, by the homological vanishing principle in Proposition~\ref{p:hom_vanish}, the homomorphism 
\[\itmap m_*:\Loc_*(\qq)\to\Hom_*(\cconf {nm}\epsilon(S)_{>mb},\cconf {nm}\epsilon(S)_{>mc})\]
is zero provided $m\in\K_p''$ is sufficiently big. This contradicts Claim~\ref{claim:2}, and concludes the proof of Theorem~\ref{t:main}.

\bibliography{billiards}
\bibliographystyle{amsalpha}

\vspace{0.5cm}

\end{document}